%
%

\documentclass[10pt]{amsart}
\usepackage{amssymb,MnSymbol}
\usepackage{amsthm,amsmath}
\usepackage{stmaryrd}

\title[Axiomatizations of quasi-Lov\'asz extensions]{Axiomatizations of quasi-Lov\'asz extensions of pseudo-Boolean functions}

\author{Miguel Couceiro}
\address{Mathematics Research Unit, FSTC, University of Luxembourg \\
6, rue Coudenhove-Kalergi, L-1359 Luxembourg, Luxembourg}%
\email{miguel.couceiro[at]uni.lu }

\author{Jean-Luc Marichal}
\address{Mathematics Research Unit, FSTC, University of Luxembourg \\
6, rue Coudenhove-Kalergi, L-1359 Luxembourg, Luxembourg}%
\email{jean-luc.marichal[at]uni.lu }

\date{June 22, 2011}

\begin{document}

\theoremstyle{plain}
\newtheorem{theorem}{Theorem}
\newtheorem{lemma}[theorem]{Lemma}
\newtheorem{proposition}[theorem]{Proposition}
\newtheorem{corollary}[theorem]{Corollary}
\newtheorem{fact}[theorem]{Fact}
\newtheorem*{main}{Main Theorem}

\theoremstyle{definition}
\newtheorem{definition}[theorem]{Definition}
\newtheorem{example}[theorem]{Example}

\theoremstyle{remark}
\newtheorem*{conjecture}{Conjecture}
\newtheorem{remark}{Remark}
\newtheorem{claim}{Claim}

\newcommand{\N}{\mathbb{N}}
\newcommand{\R}{\mathbb{R}}
\newcommand{\B}{\mathbb{B}}
\newcommand{\Vspace}{\vspace{2ex}}
\newcommand{\bfx}{\mathbf{x}}

\begin{abstract}
We introduce the concept of quasi-Lov\'asz extension as being a mapping $f\colon I^n\to\R$ defined on a nonempty real interval $I$ containing
the origin and which can be factorized as $f(x_1,\ldots,x_n)=L(\varphi(x_1),\ldots,\varphi(x_n))$, where $L$ is the Lov\'asz extension of a
pseudo-Boolean function $\psi\colon\{0,1\}^n\to\R$ (i.e., the function $L\colon\R^n\to\R$ whose restriction to each simplex of the standard
triangulation of $[0,1]^n$ is the unique affine function which agrees with $\psi$ at the vertices of this simplex) and $\varphi\colon I\to\R$ is
a nondecreasing function vanishing at the origin. These functions appear naturally within the scope of decision making under uncertainty since
they subsume overall preference functionals associated with discrete Choquet integrals whose variables are transformed by a given utility
function. To axiomatize the class of quasi-Lov\'asz extensions, we propose generalizations of properties used to characterize the Lov\'asz
extensions, including a comonotonic version of modularity and a natural relaxation of homogeneity. A variant of the latter property enables us
to axiomatize also the class of symmetric quasi-Lov\'asz extensions, which are compositions of symmetric Lov\'asz extensions with $1$-place
nondecreasing odd functions.
\end{abstract}

\keywords{Aggregation function, discrete Choquet integral, Lov\'asz extension, functional equation, comonotonic modularity, invariance under
horizontal differences, axiomatization}

\subjclass[2010]{Primary 39B22, 39B72; Secondary 26B35}

\maketitle

\section{Introduction}

Aggregation functions arise wherever merging information is needed: applied and pure mathematics (probability, statistics, decision theory,
functional equations), operations research, computer science, and many applied fields (economics and finance, pattern recognition and image
processing, data fusion, etc.). For recent references, see Beliakov et al.~\cite{BelPraCal07} and Grabisch et al.~\cite{GraMarMesPap09}.

The discrete Choquet integral has been widely investigated in aggregation theory due to its many applications, for instance, in decision making
(see the edited book \cite{GraMurSug00}). A convenient way to introduce the discrete Choquet integral is via the concept of Lov\'asz extension.
An $n$-place Lov\'asz extension is a continuous function $L\colon\R^n\to\R$ whose restriction to each of the $n!$ subdomains
$$
\R^n_{\sigma}=\{\bfx=(x_1,\ldots,x_n)\in\R^n : x_{\sigma(1)}\leqslant\cdots\leqslant x_{\sigma(n)}\},\qquad \sigma\in S_n,
$$
is an affine function, where $S_n$ denotes the set of permutations on $[n]=\{1,\ldots,n\}$. An $n$-place Choquet integral is simply a
nondecreasing (in each variable) $n$-place Lov\'asz extension which vanishes at the origin. For general background, see
\cite[{\S}5.4]{GraMarMesPap09}.

The class of $n$-place Lov\'asz extensions has been axiomatized by the authors \cite{CouMar} by means of two noteworthy aggregation properties,
namely comonotonic additivity and horizontal min-additivity (for earlier axiomatizations of the $n$-place Choquet integrals, see, e.g.,
\cite{BenMesViv02,deCBol92}). Recall that a function $f\colon\R^{n}\to\R$ is said to be \emph{comonotonically additive} if, for every $\sigma\in
S_n$, we have
$$
f(\bfx+\bfx') = f(\bfx)+ f(\bfx'),\qquad \bfx,\bfx'\in\R^n_{\sigma}.
$$
The function $f$ is said to be \emph{horizontally min-additive} if
$$
f(\bfx) = f(\bfx\wedge c)+ f(\bfx-(\bfx\wedge c)),\qquad \bfx\in\R^n,~c\in\R,
$$
where $\bfx\wedge c$ denotes the $n$-tuple whose $i$th component is $x_i\wedge c=\min(x_i,c)$.

In this paper we consider a generalization of Lov\'asz extensions, which we call quasi-Lov\'asz extensions, and which are best described by the
following equation
$$
f(x_1,\ldots,x_n)=L(\varphi(x_1),\ldots,\varphi(x_n))
$$
where $L$ is a Lov\'asz extension and $\varphi$ a nondecreasing function such that $\varphi(0)=0$. Such an aggregation function is used in
decision under uncertainty, where $\varphi$ is a utility function and $f$ an overall preference functional. It is also used in multi-criteria
decision making where the criteria are commensurate (i.e., expressed in a common scale). For a recent reference, see Bouyssou et
al.~\cite{BouDubPraPir09}.

To axiomatize the class of quasi-Lov\'asz extensions, we propose the following generalizations of comonotonic additivity and horizontal
min-additivity, namely comonotonic modularity and invariance under horizontal min-differences (as well as its dual counterpart), which we now
briefly describe. We say that a function $f\colon\R^{n}\to\R$ is \emph{comonotonically modular} if, for every $\sigma\in S_n$, we have
$$
f(\bfx)+ f(\bfx')=f(\bfx\wedge\bfx')+f(\bfx\vee\bfx'),\qquad \bfx,\bfx'\in\R^n_{\sigma},
$$
where $\bfx\wedge\bfx'$ (resp.\ $\bfx\vee\bfx'$) denotes the $n$-tuple whose $i$th component is $x_i\wedge x'_i=\min(x_i,x'_i)$ (resp.\ $x_i\vee
x'_i=\max(x_i,x'_i)$). We say that $f$ is \emph{invariant under horizontal min-differences} if
$$
f(\bfx) - f(\bfx\wedge c) = f([\bfx]_c) - f([\bfx]_c\wedge c),\qquad \bfx\in\R^n,~c\in\R,
$$
where $[\bfx]_c$ denotes the $n$-tuple whose $i$th component is $0$, if $x_i\leqslant c$, and $x_i$, otherwise.

The outline of this paper is as follows. In Section 2 we recall the definitions of Lov\'asz extensions, discrete Choquet integrals, as well as
their symmetric versions, and present representations for these functions. In Section 3 we define the concept of quasi-Lov\'asz extension and
its symmetric version, introduce natural relaxations of homogeneity, namely weak homogeneity and odd homogeneity, and characterize those
quasi-Lov\'asz extensions (resp.\ symmetric quasi-Lov\'asz extensions) that are weakly homogeneous (resp.\ oddly homogeneous). In Section 4 we
define the concepts of comonotonic modularity, invariance under horizontal min-differences and invariance under horizontal max-differences, and
completely describe the function classes axiomatized by each of these properties. In Section 5 we give axiomatizations of the class of
quasi-Lov\'asz extensions by means of the properties above and describe all possible factorizations of quasi-Lov\'asz extensions into
compositions of Lov\'asz extensions with $1$-place functions. In Section 6 we present analogous results for the symmetric
quasi-Lov\'asz extensions. Finally, in Section 7 we show that the so-called quasi-polynomial functions \cite{CouMar09} on closed intervals form a noteworthy subclass of comonotonically modular functions.

We employ the following notation throughout the paper. Let $\B=\{0,1\}$, $\R_+=\left[0,+\infty\right[$, and $\R_-=\left]-\infty,0\right]$. The
symbol $I$ denotes a nonempty real interval, possibly unbounded, containing $0$. We also introduce the notation $I_+=I\cap\R_+$,
$I_-=I\cap\R_-$, and $I^n_{\sigma}=I^n\cap\R^n_{\sigma}$. A function $f\colon I^n\to\R$, where $I$ is centered at $0$, is said to be \emph{odd}
if $f(-\bfx)=-f(\bfx)$. For any function $f\colon I^n\to\R$, we define $f_0=f-f(\mathbf{0})$. For every $A\subseteq [n]$, the symbol
$\mathbf{1}_A$ denotes the $n$-tuple whose $i$th component is $1$, if $i\in A$, and $0$, otherwise. Let also $\mathbf{1}=\mathbf{1}_{[n]}$ and
$\mathbf{0}=\mathbf{1}_{\varnothing}$. The symbols $\wedge$ and $\vee$ denote the minimum and maximum functions, respectively. For every
$\bfx\in\R^n$, let $\bfx^+=\bfx\vee 0$ and $\bfx^-=(-\bfx)^+$. For every $\bfx\in\R^n$ and every $c\in\R_+$ (resp.\ $c\in\R_-$) we denote by
$[\bfx]_c$ (resp.\ $[\bfx]^c$) the $n$-tuple whose $i$th component is $0$, if $x_i\leqslant c$ (resp.\ $x_i\geqslant c$), and $x_i$, otherwise.

In order not to restrict our framework to functions defined on $\R$, we consider functions defined on intervals $I$ containing $0$, in
particular of the forms $I_+$, $I_-$, and those centered at $0$.

\section{Lov\'asz extensions and symmetric Lov\'asz extensions}

We now recall the concepts of Lov\'asz extension and symmetric Lov\'asz extension.

Consider an $n$-place \emph{pseudo-Boolean function}, i.e.\ a function $\psi\colon\B^n\to\R$, and define the set function $v_{\psi}\colon
2^{[n]}\to\R$ by $v_{\psi}(A)=\psi(\mathbf{1}_A)$ for every $A\subseteq [n]$. Hammer and Rudeanu~\cite{HamRud68} showed that such a function has
a unique representation as a multilinear polynomial of $n$ variables
$$
\psi(\bfx)=\sum_{A\subseteq [n]}a_{\psi}(A)\,\prod_{i\in A}x_i\, ,
$$
where the set function $a_{\psi}\colon 2^{[n]}\to\R$, called the \emph{M\"obius transform} of $v_{\psi}$, is defined by
$$
a_{\psi}(A)=\sum_{B\subseteq A}(-1)^{|A|-|B|}\, v_{\psi}(B).
$$

The \emph{Lov\'asz extension} of a pseudo-Boolean function $\psi\colon\B^n\to\R$ is the function $L_{\psi}\colon\R^n\to\R$ whose restriction to
each subdomain $\R^n_{\sigma}$ $(\sigma\in S_n)$ is the unique affine function which agrees with $\psi$ at the $n+1$ vertices of the $n$-simplex
$[0,1]^n\cap\R^n_{\sigma}$ (see \cite{Lov83,Sin84}). We then have $L_{\psi}|_{\B^n}=\psi$.

It can be shown (see \cite[{\S}5.4.2]{GraMarMesPap09}) that the Lov\'asz extension of a pseudo-Boolean function $\psi\colon\B^n\to\R$ is the
continuous function
\begin{equation}\label{eq:sdfaf678df}
L_{\psi}(\bfx)=\sum_{A\subseteq [n]}a_{\psi}(A)\,\bigwedge_{i\in A}x_i\, ,\qquad \bfx\in\R^n.
\end{equation}
Its restriction to $\R^n_{\sigma}$ is the affine function
\begin{equation}\label{eq:sdfaf678}
L_{\psi}(\bfx) = \psi(\mathbf{0})+\sum_{i\in [n]}
x_{\sigma(i)}\,\big(v_{\psi}(A_{\sigma}^{\uparrow}(i))-v_{\psi}(A_{\sigma}^{\uparrow}(i+1))\big),\qquad \bfx\in\R^n_{\sigma},
\end{equation}
or equivalently,
\begin{equation}\label{eq:sdfaf678dew}
L_{\psi}(\bfx) = \psi(\mathbf{0})+\sum_{i\in [n]}
x_{\sigma(i)}\,\big(L_{\psi}(\mathbf{1}_{A_{\sigma}^{\uparrow}(i)})-L_{\psi}(\mathbf{1}_{A_{\sigma}^{\uparrow}(i+1)})\big),\qquad
\bfx\in\R^n_{\sigma},
\end{equation}
where $A_{\sigma}^{\uparrow}(i)=\{\sigma(i),\ldots,\sigma(n)\}$, with the convention that $A_{\sigma}^{\uparrow}(n+1)=\varnothing$. Indeed, for
any $k\in [n+1]$, both sides of each of the equations (\ref{eq:sdfaf678}) and (\ref{eq:sdfaf678dew}) agree at
$\bfx=\mathbf{1}_{A_{\sigma}^{\uparrow}(k)}$.

It is noteworthy that $L_{\psi}$ can also be represented by
\begin{equation}\label{eq:sdfaf678dew2}
L_{\psi}(\bfx) = \psi(\mathbf{0})+\sum_{i\in [n]}
x_{\sigma(i)}\,\big(L_{\psi}(-\mathbf{1}_{A_{\sigma}^{\downarrow}(i-1)})-L_{\psi}(-\mathbf{1}_{A_{\sigma}^{\downarrow}(i)})\big),\qquad
\bfx\in\R^n_{\sigma},
\end{equation}
where $A_{\sigma}^{\downarrow}(i)=\{\sigma(1),\ldots,\sigma(i)\}$, with the convention that $A_{\sigma}^{\downarrow}(0)=\varnothing$. Indeed,
for any $k\in [n+1]$, by (\ref{eq:sdfaf678dew}) we have
$$
L_{\psi}(-\mathbf{1}_{A_{\sigma}^{\downarrow}(k-1)})=\psi(\mathbf{0})+L_{\psi}(\mathbf{1}_{A_{\sigma}^{\uparrow}(k)})-L_{\psi}(\mathbf{1}_{A_{\sigma}^{\uparrow}(1)}).
$$

Let $\psi^d$ denotes the \emph{dual} of $\psi$, that is the function $\psi^d\colon\B^n\to\R$ defined by
$\psi^d(\bfx)=\psi(\mathbf{0})+\psi(\mathbf{1})-\psi(\mathbf{1}-\bfx)$. The next result provides further representations for $L_{\psi}$.

\begin{proposition}
The Lov\'asz extension of a pseudo-Boolean function $\psi\colon\B^n\to\R$ is given by
\begin{equation}\label{eq:sdfaf678dew223}
L_{\psi}(\bfx)=\psi(\mathbf{0})+\sum_{A\subseteq [n]}a_{\psi^d}(A)\,\bigvee_{i\in A}x_i\, ,
\end{equation}
and
\begin{equation}\label{eq:sdfaf678dew234}
L_{\psi}(\bfx)=\psi(\mathbf{0})+L_{\psi}(\bfx^+)-L_{\psi^d}(\bfx^-).
\end{equation}
\end{proposition}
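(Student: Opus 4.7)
The plan is to prove the first identity by a vertex-check on the standard triangulation of $\R^n$, and then to derive the second identity from the first combined with \eqref{eq:sdfaf678df}. Both sides of the first identity are continuous functions on $\R^n$ that are affine on each cone $\R^n_\sigma$: the left-hand side by definition of $L_\psi$, and the right-hand side because on $\R^n_\sigma$ every term $\bigvee_{i\in B}x_i$ coincides with the single coordinate $x_{\sigma(k_B)}$, where $k_B=\max\{j:\sigma(j)\in B\}$. The simplex $[0,1]^n\cap\R^n_\sigma$ has the chain $\mathbf{0},\mathbf{1}_{A_\sigma^\uparrow(n)},\ldots,\mathbf{1}_{A_\sigma^\uparrow(1)}=\mathbf{1}$ as its vertex set, so agreement at every $\mathbf{1}_A$ forces agreement on all of $\R^n$. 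At $\bfx=\mathbf{1}_A$ one has $\bigvee_{i\in B}x_i=1$ if $B\cap A\ne\emptyset$ and $0$ otherwise; splitting the sum according to this dichotomy and invoking the M\"obius relation $\sum_{B\subseteq C}a_{\psi^d}(B)=v_{\psi^d}(C)$ gives
\[
\psi(\mathbf{0})+v_{\psi^d}([n])-v_{\psi^d}([n]\setminus A)=\psi(\mathbf{0})+\psi^d(\mathbf{1})-\psi^d(\mathbf{1}_{[n]\setminus A}),
\]
which collapses to $\psi(\mathbf{1}_A)=L_\psi(\mathbf{1}_A)$ after unwinding the definition of $\psi^d$.

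For the second identity, expand $L_\psi(\bfx)$, $L_\psi(\bfx^+)$, and $L_{\psi^d}(\bfx^-)$ via \eqref{eq:sdfaf678df} and invoke the pointwise identities
\[
\bigwedge_{i\in A}x_i^+=\Bigl(\bigwedge_{i\in A}x_i\Bigr)^+,\qquad \bigvee_{i\in A}x_i^-=\Bigl(\bigwedge_{i\in A}x_i\Bigr)^-,
\]
both of which follow from the lattice distributivity $a\vee(b\wedge c)=(a\vee b)\wedge(a\vee c)$ specialized to $a=0$. Writing $\bigwedge_{i\in A}x_i=\bigl(\bigwedge_{i\in A}x_i\bigr)^+-\bigl(\bigwedge_{i\in A}x_i\bigr)^-$ and canceling the $\psi(\mathbf{0})$ terms, the target equation reduces to
\[
\sum_{\emptyset\ne A\subseteq[n]}a_\psi(A)\bigvee_{i\in A}x_i^-=\sum_{\emptyset\ne A\subseteq[n]}a_{\psi^d}(A)\bigwedge_{i\in A}x_i^-.
\]
Setting $\mathbf{y}=\bfx^-$, both sides equal $L_{\psi^d}(\mathbf{y})-\psi(\mathbf{0})$: the left-hand side by the first identity of the proposition applied to $\psi^d$ (using the immediate duality $(\psi^d)^d=\psi$, which yields $a_{(\psi^d)^d}=a_\psi$), and the right-hand side by \eqref{eq:sdfaf678df} applied to $\psi^d$.

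The only slightly subtle step is the M\"obius computation in the vertex check for the first identity; everything else is algebraic bookkeeping around the duality $(\psi^d)^d=\psi$ and the elementary lattice identities above, so I do not anticipate a serious obstacle.
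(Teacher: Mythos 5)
Your proposal is correct, and it splits neatly into two halves relative to the paper. For the second identity \eqref{eq:sdfaf678dew234} you follow essentially the paper's own route: the paper also decomposes $\bigwedge_{i\in A}x_i=\bigwedge_{i\in A}x_i^++\bigwedge_{i\in A}(-x_i^-)$ (equivalent to your $\bigwedge_{i\in A}x_i^+-\bigvee_{i\in A}x_i^-$), expands via \eqref{eq:sdfaf678df}, and identifies $\sum_{A}a_{\psi}(A)\bigvee_{i\in A}x_i^-$ with $L_{\psi^d}(\bfx^-)-\psi(\mathbf{0})$ by applying \eqref{eq:sdfaf678dew223} to $\psi^d$ together with $\psi^{dd}=\psi$ --- exactly your reduction. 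For the first identity \eqref{eq:sdfaf678dew223}, however, your argument is genuinely different: the paper first establishes the reflection formula $L_{\psi}(\bfx)=\psi(\mathbf{0})+\psi(\mathbf{1})-L_{\psi^d}(\mathbf{1}-\bfx)$ (justified by the additivity of $\psi\mapsto L_{\psi}$ and the fact that $\bfx\mapsto\mathbf{1}-\bfx$ is order-reversing) and then expands $\bigwedge_{i\in A}(1-x_i)=1-\bigvee_{i\in A}x_i$ in \eqref{eq:sdfaf678df}, whereas you verify the identity directly: both sides are continuous and affine on each cone $\R^n_\sigma$ (your observation that $\bigvee_{i\in B}x_i=x_{\sigma(k_B)}$ there is right), so agreement at the simplex vertices $\mathbf{1}_{A_\sigma^{\uparrow}(k)}$ suffices, and the vertex check reduces via $B\cap A\neq\varnothing\Leftrightarrow B\not\subseteq[n]\setminus A$ and the M\"obius relation $\sum_{B\subseteq C}a_{\psi^d}(B)=v_{\psi^d}(C)$ to $\psi(\mathbf{0})+\psi^d(\mathbf{1})-\psi^d(\mathbf{1}_{[n]\setminus A})=\psi(\mathbf{1}_A)$, which is correct. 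Your route is more self-contained --- it needs only the defining uniqueness property of $L_\psi$ and M\"obius inversion, not the (unproved in the paper) additivity of the map $\psi\mapsto L_\psi$ nor the reflection argument --- at the cost of being longer; the paper's route is shorter and yields the reflection formula as a useful by-product. Two cosmetic remarks: your identity $\bigvee_{i\in A}x_i^-=\bigl(\bigwedge_{i\in A}x_i\bigr)^-$ follows from associativity of $\vee$ (via $\bigvee_i(y_i\vee 0)=(\bigvee_i y_i)\vee 0$) rather than from distributivity with $a=0$, though it is trivially true on a chain either way; and you should state the conventions for $A=\varnothing$ (empty join $=0$, while the $A=\varnothing$ term in \eqref{eq:sdfaf678df} is the constant $a_\psi(\varnothing)=\psi(\mathbf{0})$), since your bookkeeping that restricts sums to $\varnothing\neq A$ silently relies on them. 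Neither remark affects correctness.
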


\begin{proof}
Since the Lov\'asz extension $L_{\psi}$ is additive with respect to its restriction $\psi$ (i.e., $L_{\psi+\psi'}=L_{\psi}+L_{\psi'}$), for
every $\bfx\in\R^n$, we have
$$
L_{\psi}(\bfx) = \psi(\mathbf{0})+\psi(\mathbf{1})-L_{\psi^d}(\mathbf{1}-\bfx) =
\psi(\mathbf{0})+\psi^d(\mathbf{1})-L_{\psi^d}(\mathbf{1}-\bfx),
$$
that is, by using (\ref{eq:sdfaf678df}),
$$
L_{\psi}(\bfx) = \psi(\mathbf{0})+\sum_{A\subseteq [n]}a_{\psi^d}(A)-\sum_{A\subseteq [n]}a_{\psi^d}(A)\,\bigg(1-\bigvee_{i\in A}x_i\bigg),
$$
which proves (\ref{eq:sdfaf678dew223}).

Now, for every $A\subseteq [n]$, we have $\bigwedge_{i\in A}x_i=\bigwedge_{i\in A}x_i^++\bigwedge_{i\in A}(-x_i^-)$ and hence by
(\ref{eq:sdfaf678df}),
\begin{eqnarray*}
L_{\psi}(\bfx) &=& \psi(\mathbf{0})+\sum_{\varnothing\neq A\subseteq [n]}a_{\psi}(A)\,\Big(\bigwedge_{i\in A}x_i^++\bigwedge_{i\in
A}(-x_i^-)\Big)\\
&=& \psi(\mathbf{0})+\sum_{\varnothing\neq A\subseteq [n]}a_{\psi}(A)\,\bigwedge_{i\in A}x_i^+-\sum_{A\subseteq [n]}a_{\psi}(A)\,\bigvee_{i\in
A}x_i^-
\end{eqnarray*}
which, using (\ref{eq:sdfaf678dew223}) and the identity $\psi^{dd}=\psi$, leads to (\ref{eq:sdfaf678dew234}).
\end{proof}


A function $f\colon\R^n\to\R$ is said to be a \emph{Lov\'asz extension} if there is a pseudo-Boolean function $\psi\colon\B^n\to\R$ such that
$f=L_{\psi}$.

An $n$-place \emph{Choquet integral} is a nondecreasing Lov\'asz extension $L_{\psi}\colon\R^n\to\R$ such that $L_{\psi}(\mathbf{0})=0$. It is
easy to see that a Lov\'asz extension $L\colon\R^n\to\R$ is an $n$-place Choquet integral if and only if its underlying pseudo-Boolean function
$\psi=L|_{\B^n}$ is nondecreasing and vanishes at the origin (see \cite[{\S}5.4]{GraMarMesPap09}).

The \emph{symmetric Lov\'asz extension} of a pseudo-Boolean function $\psi\colon\B^n\to\R$ is the function $\check{L}\colon\R^n\to\R$ defined by
(see \cite{CouMar})
$$
\check{L}_{\psi}(\bfx)=\psi(\mathbf{0})+L_{\psi}(\bfx^+)-L_{\psi}(\bfx^-).
$$
In particular, we see that $\check{L}_{\psi}-\check{L}_{\psi}(\mathbf{0})=\check{L}_{\psi}-\psi(\mathbf{0})$ is an odd function.

It is easy to see that the restriction of $\check{L}_{\psi}$ to $\R^n_{\sigma}$ is the function
\begin{eqnarray}
\check{L}_{\psi}(\bfx) &=& \psi(\mathbf{0})+\sum_{1\leqslant i\leqslant p}
x_{\sigma(i)}\,\big(L_{\psi}(\mathbf{1}_{A_{\sigma}^{\downarrow}(i)})-L_{\psi}(\mathbf{1}_{A_{\sigma}^{\downarrow}(i-1)})\big)\nonumber\\
&& \null +\sum_{p+1\leqslant i\leqslant n}
x_{\sigma(i)}\,\big(L_{\psi}(\mathbf{1}_{A_{\sigma}^{\uparrow}(i)})-L_{\psi}(\mathbf{1}_{A_{\sigma}^{\uparrow}(i+1)})\big),\qquad
\bfx\in\R^n_{\sigma},\label{eq:sdfsfd65dsf}
\end{eqnarray}
where the integer $p\in\{0,\ldots,n\}$ is such that $x_{\sigma(p)}<0\leqslant x_{\sigma(p+1)}$.

A function $f\colon\R^n\to\R$ is said to be a \emph{symmetric Lov\'asz extension} if there is a pseudo-Boolean function $\psi\colon\B^n\to\R$
such that $f=\check{L}_{\psi}$.

Nondecreasing symmetric Lov\'asz extensions vanishing at the origin, also called \emph{discrete symmetric Choquet integrals}, were introduced by
\v{S}ipo\v{s} \cite{Sip79} (see also \cite[{\S}5.4]{GraMarMesPap09}).

\section{Quasi-Lov\'asz extensions and symmetric quasi-Lov\'asz extensions}

In this section we introduce the concepts of quasi-Lov\'asz extension and symmetric quasi-Lov\'asz extension. We also introduce natural
relaxations of homogeneity, namely weak homogeneity and odd homogeneity, and characterize those quasi-Lov\'asz extensions (resp.\ symmetric
quasi-Lov\'asz extensions) that are weakly homogeneous (resp.\ oddly homogeneous). Recall that $I$ is a real interval containing $0$.

A \emph{quasi-Lov\'asz extension} is a function $f\colon I^n\to\R$ defined by
$$
f=L\circ(\varphi,\ldots,\varphi),
$$
also written $f=L\circ\varphi$, where $L\colon\R^n\to\R$ is a Lov\'asz extension and $\varphi\colon I\to\R$ is a nondecreasing function
satisfying $\varphi(0)=0$. Observe that a function $f\colon I^n\to\R$ is a quasi-Lov\'asz extension if and only if $f_0=L_0\circ\varphi$.

\begin{lemma}\label{lemma:sdf7d}
Assume $I\subseteq\R_+$. For every quasi-Lov\'asz extension $f\colon I^n\to\R$, $f=L\circ\varphi$, we have
\begin{equation}\label{eq:dsa786}
f_0(x\mathbf{1}_A)=\varphi(x)L_0(\mathbf{1}_A),\qquad x\in I,~A\subseteq [n].
\end{equation}
\end{lemma}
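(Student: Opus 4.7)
The plan is to unfold the definition of $f=L\circ\varphi$ at the special input $x\mathbf{1}_A$ and then exploit the positive homogeneity of the normalized Lov\'asz extension $L_0$, which we can read off directly from representation (\ref{eq:sdfaf678df}).

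First I would note that since $\varphi(0)=0$, applying $\varphi$ coordinatewise to the $n$-tuple $x\mathbf{1}_A$ produces precisely the $n$-tuple $\varphi(x)\mathbf{1}_A$. Hence
$$
f(x\mathbf{1}_A)=L(\varphi(x)\mathbf{1}_A),\qquad x\in I,\ A\subseteq[n].
$$
Taking $x=0$ (or $A=\varnothing$) yields $f(\mathbf{0})=L(\mathbf{0})$, so that $f_0(x\mathbf{1}_A)=L_0(\varphi(x)\mathbf{1}_A)$. It therefore suffices to establish the identity
$$
L_0(t\mathbf{1}_A)=t\, L_0(\mathbf{1}_A),\qquad t\geqslant 0,\ A\subseteq[n].
$$

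Next I would invoke representation (\ref{eq:sdfaf678df}) applied to $\psi=L|_{\B^n}$. Subtracting $L(\mathbf{0})=a_{\psi}(\varnothing)$ gives
$$
L_0(\bfx)=\sum_{\varnothing\neq B\subseteq[n]}a_{\psi}(B)\,\bigwedge_{i\in B}x_i.
$$
Since $t\geqslant 0$, the minimum is positively homogeneous: $\bigwedge_{i\in B}(t x_i)=t\bigwedge_{i\in B}x_i$. Therefore $L_0(t\bfx)=t\, L_0(\bfx)$ for every $t\geqslant 0$ and every $\bfx\in\R^n$, and in particular at $\bfx=\mathbf{1}_A$.

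Finally, since $I\subseteq\R_+$, we have $x\geqslant 0$, and because $\varphi$ is nondecreasing with $\varphi(0)=0$, we have $\varphi(x)\geqslant 0$. Setting $t=\varphi(x)$ in the homogeneity identity yields
$$
f_0(x\mathbf{1}_A)=L_0(\varphi(x)\mathbf{1}_A)=\varphi(x)\,L_0(\mathbf{1}_A),
$$
which is (\ref{eq:dsa786}). There is really no obstacle here; the only point requiring the hypothesis $I\subseteq\R_+$ is the nonnegativity of $\varphi(x)$, which is what lets us use positive homogeneity of $L_0$ rather than having to split $\varphi(x)$ into positive and negative parts as in (\ref{eq:sdfaf678dew234}).
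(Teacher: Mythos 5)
Your proof is correct, but it takes a different route from the paper's. The paper chooses $\sigma\in S_n$ with $x\mathbf{1}_A\in I^n_{\sigma}$ (noting that $\varphi(x)\mathbf{1}_A$ then lies in the same cone $\R^n_{\sigma}$, since $\varphi$ is nondecreasing) and plugs $\varphi(x)\mathbf{1}_A$ into the simplex-wise affine representation (\ref{eq:sdfaf678dew}); the resulting sum telescopes over $n-|A|+1\leqslant i\leqslant n$ to give $\varphi(x)\bigl(L(\mathbf{1}_A)-L(\mathbf{0})\bigr)=\varphi(x)L_0(\mathbf{1}_A)$. You instead isolate the general fact that $L_0$ is positively homogeneous, $L_0(t\bfx)=t\,L_0(\bfx)$ for all $t\geqslant 0$ and $\bfx\in\R^n$, reading it off the M\"obius representation (\ref{eq:sdfaf678df}) via $\bigwedge_{i\in B}(tx_i)=t\bigwedge_{i\in B}x_i$, and then specialize to $t=\varphi(x)\geqslant 0$. (A pedantic point: your identity $L(\mathbf{0})=a_{\psi}(\varnothing)$ uses the convention that the $\varnothing$-term in (\ref{eq:sdfaf678df}) is the constant $a_{\psi}(\varnothing)$; since that term is killed by subtracting $L(\mathbf{0})$, nothing depends on it.) Your route buys a cleaner reusable intermediate statement: positive homogeneity of $L_0$ also yields the negative counterpart, Lemma~\ref{lemma:sdf7daa}, directly from $\varphi(x)\mathbf{1}_A=(-\varphi(x))(-\mathbf{1}_A)$ with $-\varphi(x)\geqslant 0$, whereas the paper derives that lemma through the alternative representation (\ref{eq:sdfaf678dew2}). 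The paper's computation, on the other hand, rehearses exactly the telescoping-on-$\R^n_{\sigma}$ pattern that drives the later axiomatization results (Theorems~\ref{thm:horizontaldiff} and \ref{thm:CanonicalQuasi-Lovasz}), so it serves as a preview of the main technique.
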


\begin{proof}
For every $x\in I$ and every $A\subseteq [n]$, there exists $\sigma\in S_n$ such that $x\mathbf{1}_A\in I^n_{\sigma}$ and, using
(\ref{eq:sdfaf678dew}), we then obtain
$$
f_0(x\mathbf{1}_A) = \sum_{n-|A|+1\leqslant i\leqslant n}
\varphi(x)\,\big(L(\mathbf{1}_{A_{\sigma}^{\uparrow}(i)})-L(\mathbf{1}_{A_{\sigma}^{\uparrow}(i+1)})\big)= \varphi(x)L_0(\mathbf{1}_A).\qedhere
$$
\end{proof}

Observe that if $[0,1]\subseteq I\subseteq\R_+$ and $\varphi(1)=1$, then the equation in (\ref{eq:dsa786}) becomes
$f_0(x\mathbf{1}_A)=\varphi(x)f_0(\mathbf{1}_A)$. This motivates the following definition. We say that a function $f\colon I^n\to\R$, where
$I\subseteq\R_+$, is \emph{weakly homogeneous} if there exists a nondecreasing function $\varphi\colon I\to\R$ satisfying $\varphi(0)=0$ such
that $f(x\mathbf{1}_A)=\varphi(x)f(\mathbf{1}_A)$ for every $x\in I$ and every $A\subseteq [n]$.

Clearly, every weakly homogeneous function $f$ satisfies $f(\mathbf{0})=0$ (take $x=0$ in the definition).

The following proposition provides necessary and sufficient conditions on a nonconstant quasi-Lov\'asz extension $f\colon I^n\to\R$ for the
function $f_0$ to be weakly homogeneous.

\begin{proposition}\label{lemma:ds8686}
Assume $[0,1]\subseteq I\subseteq\R_+$. Let $f\colon I^n\to\R$ be a nonconstant quasi-Lov\'asz extension, $f=L\circ\varphi$. Then the following
conditions are equivalent.
\begin{enumerate}
\item[$(i)$] $f_0$ is weakly homogeneous.

\item[$(ii)$] There exists $A\subseteq [n]$ such that $f_0(\mathbf{1}_A)\neq 0$.

\item[$(iii)$] $\varphi(1)\neq 0$.
\end{enumerate}
In this case we have $f_0(x\mathbf{1}_A)=\frac{\varphi(x)}{\varphi(1)}\, f_0(\mathbf{1}_A)$ for every $x\in I$ and every $A\subseteq [n]$.
\end{proposition}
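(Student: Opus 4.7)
The plan is to leverage Lemma \ref{lemma:sdf7d}, which gives $f_0(x\mathbf{1}_A)=\varphi(x)L_0(\mathbf{1}_A)$ for every $x\in I$ and $A\subseteq[n]$; specializing to $x=1$ yields the key identity $f_0(\mathbf{1}_A)=\varphi(1)L_0(\mathbf{1}_A)$. I will run the cycle $(i)\Leftarrow(iii)\Rightarrow(ii)\Rightarrow(iii)$ around this identity, with the displayed formula falling out as a by-product of the first step and $(i)\Rightarrow(ii)$ being the only step that actually uses the nonconstancy of $f$.

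For $(iii)\Rightarrow(i)$: since $\varphi$ is nondecreasing with $\varphi(0)=0$, the hypothesis $\varphi(1)\neq 0$ sharpens to $\varphi(1)>0$. Solving the key identity for $L_0(\mathbf{1}_A)$ and substituting back into Lemma \ref{lemma:sdf7d} gives $f_0(x\mathbf{1}_A)=\frac{\varphi(x)}{\varphi(1)}\,f_0(\mathbf{1}_A)$; because $x\mapsto\varphi(x)/\varphi(1)$ is nondecreasing and vanishes at $0$, this equation simultaneously witnesses weak homogeneity of $f_0$ and is the displayed formula promised in the statement. The implication $(ii)\Rightarrow(iii)$ is then immediate, since $f_0(\mathbf{1}_A)=\varphi(1)L_0(\mathbf{1}_A)$ forces $\varphi(1)\neq 0$ as soon as some $f_0(\mathbf{1}_A)$ is nonzero.

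For $(i)\Rightarrow(ii)$ I would argue by contradiction. Assuming $f_0(\mathbf{1}_A)=0$ for every $A\subseteq[n]$, weak homogeneity propagates this to $f_0(x\mathbf{1}_A)=0$ for every $x$ and $A$, and Lemma \ref{lemma:sdf7d} rewrites it as $\varphi(x)L_0(\mathbf{1}_A)=0$ on all of $I\times 2^{[n]}$. Two cases arise: if $\varphi$ vanishes identically on $I$, then $f\equiv L(\mathbf{0})$; otherwise there is some $x^{*}\in I$ with $\varphi(x^{*})\neq 0$, which forces $L_0(\mathbf{1}_A)=0$ for every $A\subseteq[n]$, and since a Lov\'asz extension is entirely determined by its restriction to $\B^n$, this forces $L_0\equiv 0$ and again $f\equiv L(\mathbf{0})$. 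Both alternatives contradict the nonconstancy of $f$.

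The only (mild) obstacle is precisely this last dichotomy: from the single hypothesis that $f$ is nonconstant one must rule out both the degenerate case $\varphi\equiv 0$ and the degenerate case $L_0\equiv 0$, which relies on the foundational fact recalled in Section 2 that a Lov\'asz extension is pinned down by its vertex values on $\B^n$. Everything else reduces to straightforward algebraic manipulation of the two identities produced by Lemma \ref{lemma:sdf7d}.
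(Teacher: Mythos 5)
Your proof is correct and takes essentially the same route as the paper's: every implication is driven by the identity $f_0(x\mathbf{1}_A)=\varphi(x)L_0(\mathbf{1}_A)$ of Lemma~\ref{lemma:sdf7d}, with the same contradiction argument for $(i)\Rightarrow(ii)$ splitting into the cases $\varphi\equiv 0$ and $L_0(\mathbf{1}_A)=0$ for all $A\subseteq[n]$. The only cosmetic differences are that you close the second case via uniqueness of a Lov\'asz extension on $\B^n$ where the paper invokes the representation (\ref{eq:sdfaf678dew}) (the same fact), and you add the harmless extra observation that $\varphi(1)>0$.
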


\begin{proof}
Let us prove that $(i)\Rightarrow (ii)$ by contradiction. Assume that $f_0(\mathbf{1}_A)= 0$ for every $A\subseteq [n]$. Since $f_0$ is weakly
homogeneous, we must have $f_0(x\mathbf{1}_A)=0$ for every $x\in I$ and every $A\subseteq [n]$. By (\ref{eq:dsa786}), we then have
$\varphi\equiv 0$ or $L_0(\mathbf{1}_A)=0$ for every $A\subseteq [n]$. In either case, by (\ref{eq:sdfaf678dew}), we have $f_0\equiv 0$, i.e.\
$f$ is constant, a contradiction.

Let us prove that $(ii)\Rightarrow (iii)$ by contradiction. If we had $\varphi(1)=0$, then by (\ref{eq:dsa786}) we would have
$f_0(\mathbf{1}_A)= 0$ for every $A\subseteq [n]$, a contradiction.

Let us prove that $(iii)\Rightarrow (i)$. By (\ref{eq:dsa786}), we have $f_0(x\mathbf{1}_A)=\frac{\varphi(x)}{\varphi(1)}\, f_0(\mathbf{1}_A)$,
which shows that $f_0$ is weakly homogeneous.
\end{proof}

\begin{remark}\label{rem:as9f67}
\begin{enumerate}
\item[$(a)$] If $[0,1]\varsubsetneq I\subseteq\R_+$, then the quasi-Lov\'asz extension $f\colon I^n\to\R$ defined by $f(\bfx)=\bigwedge_{i\in
[n]}\varphi(x_i)$, where $\varphi(x)=0\vee(x-1)$, is not weakly homogeneous.

\item[$(b)$] When $I=[0,1]$, the assumption that $f$ is nonconstant implies immediately that $\varphi(1)\neq 0$. We then see by
Proposition~\ref{lemma:ds8686} that $f_0$ is weakly homogeneous. Note also that, if $f$ is constant, then $f_0\equiv 0$ is clearly weakly
homogeneous. Thus, for any quasi-Lov\'asz extension $f\colon [0,1]^n\to\R$, the function $f_0$ is weakly homogeneous.
\end{enumerate}
\end{remark}

Dually, we say that a function $f\colon I^n\to\R$, where $I\subseteq\R_-$, is \emph{weakly homogeneous} if there exists a nondecreasing function
$\varphi\colon I\to\R$ satisfying $\varphi(0)=0$ such that $f(x\mathbf{1}_A)=-\varphi(x)f(-\mathbf{1}_A)$ for every $x\in I$ and every
$A\subseteq [n]$.

Using (\ref{eq:sdfaf678dew2}) (instead of (\ref{eq:sdfaf678dew})), we can easily obtain the following negative counterparts of
Lemma~\ref{lemma:sdf7d} and Proposition~\ref{lemma:ds8686}.

\begin{lemma}\label{lemma:sdf7daa}
Assume $I\subseteq\R_-$. For every quasi-Lov\'asz extension $f\colon I^n\to\R$, $f=L\circ\varphi$, we have
$$
f_0(x\mathbf{1}_A)=-\varphi(x)L_0(-\mathbf{1}_A),\qquad x\in I,~A\subseteq [n].
$$
\end{lemma}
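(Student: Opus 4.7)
The proof will mirror the positive-case argument of Lemma~\ref{lemma:sdf7d}, but with representation (\ref{eq:sdfaf678dew2}) playing the role that (\ref{eq:sdfaf678dew}) played there. The underlying reason is that for $x \in I\subseteq\R_-$ we have $\varphi(x) \leqslant \varphi(0) = 0$, so the components of $\varphi(x\mathbf{1}_A)$ that equal $\varphi(x)$ are the \emph{smallest} ones, and representation (\ref{eq:sdfaf678dew2}) is the one whose coefficients involve the ``downward'' sets $A_\sigma^{\downarrow}(i)$, matching that ordering.

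Concretely, I would fix $x\in I$ and $A\subseteq[n]$, set $k=|A|$, and choose $\sigma\in S_n$ such that $A_{\sigma}^{\downarrow}(k)=\{\sigma(1),\ldots,\sigma(k)\}=A$. Since $\varphi(x)\leqslant 0$, the tuple $\varphi(x\mathbf{1}_A)$ (which has value $\varphi(x)$ on $A$ and $0$ elsewhere) lies in $\R^n_\sigma$. Applying (\ref{eq:sdfaf678dew2}) to $L$ at this point gives
\[
f(x\mathbf{1}_A) = \psi(\mathbf{0})+\sum_{i=1}^n \varphi(x\mathbf{1}_A)_{\sigma(i)}\,\big(L(-\mathbf{1}_{A_\sigma^{\downarrow}(i-1)})-L(-\mathbf{1}_{A_\sigma^{\downarrow}(i)})\big).
\]
For $i>k$ the factor $\varphi(x\mathbf{1}_A)_{\sigma(i)}$ vanishes, while for $1\leqslant i\leqslant k$ it equals $\varphi(x)$, so the sum telescopes to
\[
\varphi(x)\,\big(L(-\mathbf{1}_{A_\sigma^{\downarrow}(0)})-L(-\mathbf{1}_{A_\sigma^{\downarrow}(k)})\big)=\varphi(x)\,\big(L(\mathbf{0})-L(-\mathbf{1}_A)\big).
\]

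Subtracting $f(\mathbf{0})=\psi(\mathbf{0})=L(\mathbf{0})$ from both sides then yields $f_0(x\mathbf{1}_A)=-\varphi(x)\,L_0(-\mathbf{1}_A)$, as desired. I do not anticipate any real obstacle here: the only subtlety is to pick $\sigma$ so that $A$ indexes the \emph{first} $k$ positions (the ones carrying the nonpositive value $\varphi(x)$), and then the representation (\ref{eq:sdfaf678dew2}) collapses via a one-step telescoping exactly as (\ref{eq:sdfaf678dew}) did in the proof of Lemma~\ref{lemma:sdf7d}.
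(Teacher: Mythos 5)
Your proposal is correct and is precisely the argument the paper intends: the paper proves this lemma by remarking that it follows from the proof of Lemma~\ref{lemma:sdf7d} upon replacing (\ref{eq:sdfaf678dew}) with (\ref{eq:sdfaf678dew2}), which is exactly your choice of $\sigma$ placing $A$ in the first $|A|$ positions (valid since $\varphi(x)\leqslant\varphi(0)=0$) followed by the telescoping of the $A_{\sigma}^{\downarrow}$-coefficients. Your computation, including the final identification $f(\mathbf{0})=L(\mathbf{0})=\psi(\mathbf{0})$ yielding $f_0(x\mathbf{1}_A)=-\varphi(x)L_0(-\mathbf{1}_A)$, matches the paper's (omitted) dual argument step for step.
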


\begin{proposition}\label{lemma:ds8686aa}
Assume $[-1,0]\subseteq I\subseteq\R_-$. Let $f\colon I^n\to\R$ be a nonconstant quasi-Lov\'asz extension, $f=L\circ\varphi$. Then the following
conditions are equivalent.
\begin{enumerate}
\item[$(i)$] $f_0$ is weakly homogeneous.

\item[$(ii)$] There exists $A\subseteq [n]$ such that $f_0(-\mathbf{1}_A)\neq 0$.

\item[$(iii)$] $\varphi(-1)\neq 0$.
\end{enumerate}
In this case we have $f_0(x\mathbf{1}_A)=\frac{\varphi(x)}{\varphi(-1)}\, f_0(-\mathbf{1}_A)$ for every $x\in I$ and every $A\subseteq [n]$.
\end{proposition}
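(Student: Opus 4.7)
The plan is to mirror the proof of Proposition \ref{lemma:ds8686} in the negative setting, systematically substituting Lemma \ref{lemma:sdf7daa} for Lemma \ref{lemma:sdf7d} and the ``downward'' representation (\ref{eq:sdfaf678dew2}) for the ``upward'' one (\ref{eq:sdfaf678dew}). The proof proceeds $(i)\Rightarrow(ii)\Rightarrow(iii)\Rightarrow(i)$, with a final computation to identify the witness for weak homogeneity.

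For $(i)\Rightarrow(ii)$, I argue by contradiction. Assume $f_0(-\mathbf{1}_A)=0$ for every $A\subseteq[n]$. By weak homogeneity of $f_0$, there exists a nondecreasing $\psi\colon I\to\R$ with $\psi(0)=0$ such that $f_0(x\mathbf{1}_A)=-\psi(x)f_0(-\mathbf{1}_A)=0$ for every $x\in I$ and every $A\subseteq[n]$. By Lemma \ref{lemma:sdf7daa}, this forces $\varphi(x)L_0(-\mathbf{1}_A)=0$ identically, so either $\varphi\equiv 0$ on $I$ (in which case $f$ is constant) or $L_0(-\mathbf{1}_A)=0$ for every $A\subseteq[n]$. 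In the latter case, the representation (\ref{eq:sdfaf678dew2}) shows that $L_0\equiv 0$ on each $\R^n_\sigma$, hence $L$, and therefore $f$, is constant. Both alternatives contradict the nonconstancy of $f$. For $(ii)\Rightarrow(iii)$, again by contradiction, if $\varphi(-1)=0$, then Lemma \ref{lemma:sdf7daa} at $x=-1$ yields $f_0(-\mathbf{1}_A)=-\varphi(-1)L_0(-\mathbf{1}_A)=0$ for every $A\subseteq[n]$, contradicting $(ii)$.

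For $(iii)\Rightarrow(i)$, I apply Lemma \ref{lemma:sdf7daa} at $x=-1$ to obtain $L_0(-\mathbf{1}_A)=-\varphi(-1)^{-1}\,f_0(-\mathbf{1}_A)$, and substitute back into the formula of Lemma \ref{lemma:sdf7daa} to get
$$
f_0(x\mathbf{1}_A)=\frac{\varphi(x)}{\varphi(-1)}\,f_0(-\mathbf{1}_A),\qquad x\in I,~A\subseteq[n],
$$
which also establishes the final displayed identity in the statement. To see that this exhibits $f_0$ as weakly homogeneous in the sense of the definition for $I\subseteq\R_-$, set $\psi(x):=-\varphi(x)/\varphi(-1)$, so that $f_0(x\mathbf{1}_A)=-\psi(x)f_0(-\mathbf{1}_A)$.

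The only subtle point, and the place where the negative setting really differs from the positive one, is the sign bookkeeping: one must check that $\psi$ is nondecreasing with $\psi(0)=0$. The latter is immediate from $\varphi(0)=0$. For the former, since $-1\in I$ and $\varphi$ is nondecreasing with $\varphi(0)=0$, we have $\varphi(-1)\leqslant 0$, and combined with $\varphi(-1)\neq 0$ we get $\varphi(-1)<0$; hence $-1/\varphi(-1)>0$ and $\psi$ inherits monotonicity from $\varphi$. This is the main (mild) obstacle: absent the hypothesis $-1\in I$, the identification of a suitable witness $\psi$ would fail, which is precisely why the assumption $[-1,0]\subseteq I$ is imposed.
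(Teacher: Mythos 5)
Your proof is correct and takes exactly the route the paper intends: the paper obtains this proposition by dualizing the proof of Proposition~\ref{lemma:ds8686}, substituting Lemma~\ref{lemma:sdf7daa} for Lemma~\ref{lemma:sdf7d} and the representation (\ref{eq:sdfaf678dew2}) for (\ref{eq:sdfaf678dew}), which is precisely your strategy. Your explicit sign check that $\psi(x)=-\varphi(x)/\varphi(-1)$ is a nondecreasing witness with $\psi(0)=0$ (via $\varphi(-1)<0$, using $-1\in I$) merely makes explicit a detail the paper leaves to the reader.
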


Assume now that $-x\in I$ whenever $x\in I$, that is, $I$ is centered at $0$. A \emph{symmetric quasi-Lov\'asz extension} is a function $f\colon
I^n\to\R$ defined by
$$
f=\check{L}\circ\varphi,
$$
where $\check{L}\colon\R^n\to\R$ is a symmetric Lov\'asz extension and $\varphi\colon I\to\R$ is a nondecreasing odd function.

Combining Lemmas~\ref{lemma:sdf7d} and \ref{lemma:sdf7daa} with the fact that $\check{L}_0$ and $\varphi$ are odd functions, we obtain
immediately the following result.

\begin{lemma}\label{lemma:asd68}
Assume that $I$ is centered at $0$. For every symmetric quasi-Lov\'asz extension $f\colon I^n\to\R$, $f=\check{L}\circ\varphi$, we have
\begin{equation}\label{eq:dsa786aabv}
f_0(x\mathbf{1}_A) = \varphi(x)\check{L}_0(\mathbf{1}_A),\qquad x\in I,~A\subseteq [n].
\end{equation}
\end{lemma}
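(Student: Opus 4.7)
The plan is to reduce the claim to the two single-signed Lemmas~\ref{lemma:sdf7d} and \ref{lemma:sdf7daa} by splitting on the sign of $x$ and using the oddness of $\check{L}_0$ and $\varphi$ to pass between the two cones. As a preliminary reformulation I would note that $f(\mathbf{0})=\check{L}(\varphi(\mathbf{0}))=\check{L}(\mathbf{0})$, so $f_0=\check{L}_0\circ\varphi$ and it suffices to establish
$$
\check{L}_0(\varphi(x)\mathbf{1}_A)=\varphi(x)\,\check{L}_0(\mathbf{1}_A),\qquad x\in I,~A\subseteq[n].
$$

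First I would handle $x\in I_+$. Since $\varphi$ is nondecreasing with $\varphi(0)=0$, one has $\varphi(x)\geqslant 0$, and inspection of the defining formula $\check{L}_\psi(\bfx)=\psi(\mathbf{0})+L_\psi(\bfx^+)-L_\psi(\bfx^-)$ shows that $\check{L}_\psi$ coincides with $L_\psi$ on $\R_+^n$ (because $\bfx^-=\mathbf{0}$ and $L_\psi(\mathbf{0})=\psi(\mathbf{0})$). Consequently the restriction $f|_{I_+^n}$ is the ordinary quasi-Lov\'asz extension $L_\psi\circ(\varphi|_{I_+})$, and Lemma~\ref{lemma:sdf7d} applies and yields $f_0(x\mathbf{1}_A)=\varphi(x)L_{\psi,0}(\mathbf{1}_A)$; since $L_{\psi,0}(\mathbf{1}_A)=\check{L}_0(\mathbf{1}_A)$ on the nonnegative cone, this is exactly the desired identity.

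For $x\in I_-$ I would invoke symmetry rather than redo the computation. The oddness of $\varphi$ gives $\varphi(x)\mathbf{1}_A=-\varphi(-x)\mathbf{1}_A$, and the oddness of $\check{L}_0$ (noted in the paper just after the definition of $\check{L}$) then yields
$$
\check{L}_0(\varphi(x)\mathbf{1}_A)=-\check{L}_0(\varphi(-x)\mathbf{1}_A).
$$
Since $-x\in I_+$, the first case applies to the right-hand side, producing $-\varphi(-x)\,\check{L}_0(\mathbf{1}_A)=\varphi(x)\,\check{L}_0(\mathbf{1}_A)$, as required. (Alternatively, one could identify $\check{L}_\psi$ on $\R_-^n$ with a Lov\'asz extension via $\check{L}_\psi(\bfx)=2\psi(\mathbf{0})-L_\psi(-\bfx)$ and apply Lemma~\ref{lemma:sdf7daa} directly, but the oddness shortcut is cleaner.) There is essentially no serious obstacle here: the only point requiring care is the identification of $\check{L}_\psi$ with $L_\psi$ on $\R_+^n$, after which the two earlier lemmas together with the two oddness properties close the argument.
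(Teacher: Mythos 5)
Your proof is correct and takes essentially the same route as the paper, whose entire proof is the one-line remark that the result follows by combining Lemmas~\ref{lemma:sdf7d} and \ref{lemma:sdf7daa} with the oddness of $\check{L}_0$ and $\varphi$. You simply make that combination explicit --- in particular the needed identification $\check{L}_{\psi}=L_{\psi}$ on $\R_+^n$ --- and replace the direct appeal to Lemma~\ref{lemma:sdf7daa} on $I_-^n$ by an equivalent oddness reflection to the positive cone, which you correctly note is interchangeable with the Lemma~\ref{lemma:sdf7daa} route.
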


We say that a function $f\colon I^n\to\R$, where $I$ centered at $0$, is \emph{oddly homogeneous} if there exists a nondecreasing odd function
$\varphi\colon I\to\R$ such that $f(x\mathbf{1}_A)=\varphi(x)f(\mathbf{1}_A)$ for every $x\in I$ and every $A\subseteq [n]$.

Clearly, for every oddly homogeneous function $f$, the functions $f|_{I_+^n}$ and $f|_{I_-^n}$ are weakly homogeneous.

The following proposition provides necessary and sufficient conditions on a nonconstant symmetric quasi-Lov\'asz extension $f\colon I^n\to\R$
for the function $f_0$ to be oddly homogeneous.

\begin{proposition}\label{lemma:ds8686sym}
Assume that $I$ is centered at $0$ with $[-1,1]\subseteq I$. Let $f\colon I^n\to\R$ be a symmetric quasi-Lov\'asz extension,
$f=\check{L}\circ\varphi$, such that $f|_{I_+^n}$ or $f|_{I_-^n}$ is nonconstant. Then the following conditions are equivalent.
\begin{enumerate}
\item[$(i)$] $f_0$ is oddly homogeneous.

\item[$(ii)$] There exists $A\subseteq [n]$ such that $f_0(\mathbf{1}_A)\neq 0$.

\item[$(iii)$] $\varphi(1)\neq 0$.
\end{enumerate}
In this case we have $f_0(x\mathbf{1}_A)=\frac{\varphi(x)}{\varphi(1)}\, f_0(\mathbf{1}_A)$ for every $x\in I$ and every $A\subseteq [n]$.
\end{proposition}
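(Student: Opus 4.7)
The plan is to follow exactly the three-step cycle $(i)\Rightarrow (ii)\Rightarrow (iii)\Rightarrow (i)$ used in the proof of Proposition~\ref{lemma:ds8686}, with Lemma~\ref{lemma:asd68} playing the role of Lemma~\ref{lemma:sdf7d}. The workhorse will be the identity
$$
f_0(x\mathbf{1}_A)=\varphi(x)\,\check{L}_0(\mathbf{1}_A),\qquad x\in I,~A\subseteq [n],
$$
from Lemma~\ref{lemma:asd68}. I will also use the elementary computation $\check{L}_\psi(\mathbf{1}_A)=\psi(\mathbf{0})+L_\psi(\mathbf{1}_A)-L_\psi(\mathbf{0})=\psi(\mathbf{1}_A)$, which gives $\check{L}_0(\mathbf{1}_A)=\psi(\mathbf{1}_A)-\psi(\mathbf{0})$; in particular, the vanishing of $\check{L}_0(\mathbf{1}_A)$ for every $A\subseteq [n]$ forces $\psi$ to be constant, and hence $f=\check{L}\circ\varphi$ to be constant.

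For $(i)\Rightarrow (ii)$ I would argue by contradiction: suppose $f_0(\mathbf{1}_A)=0$ for every $A\subseteq [n]$. Odd homogeneity of $f_0$ then yields $f_0(x\mathbf{1}_A)=0$ for every $x\in I$ and every $A$, so by Lemma~\ref{lemma:asd68} we have $\varphi(x)\,\check{L}_0(\mathbf{1}_A)=0$ on all of $I\times 2^{[n]}$. Either $\check{L}_0(\mathbf{1}_A)=0$ for every $A$---in which case $\psi$ is constant by the computation above and so are $L_\psi$, $\check{L}_\psi$, and hence $f$---or some $\check{L}_0(\mathbf{1}_{A_0})\neq 0$, which forces $\varphi\equiv 0$ on $I$ and again makes $f$ constant. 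In either case both $f|_{I_+^n}$ and $f|_{I_-^n}$ are constant, contradicting the hypothesis.

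The implication $(ii)\Rightarrow (iii)$ is immediate by contraposition: if $\varphi(1)=0$, then Lemma~\ref{lemma:asd68} at $x=1$ yields $f_0(\mathbf{1}_A)=\varphi(1)\check{L}_0(\mathbf{1}_A)=0$ for every $A$. For $(iii)\Rightarrow (i)$, since $\varphi$ is nondecreasing and odd with $\varphi(0)=0$, the condition $\varphi(1)\neq 0$ gives $\varphi(1)>0$, so dividing the identity of Lemma~\ref{lemma:asd68} by $\varphi(1)$ produces the stated formula $f_0(x\mathbf{1}_A)=(\varphi(x)/\varphi(1))\,f_0(\mathbf{1}_A)$. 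The normalized function $\varphi/\varphi(1)$ inherits being nondecreasing and odd, which witnesses odd homogeneity of $f_0$.

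The only subtle point I expect is the case split in $(i)\Rightarrow (ii)$: because $\varphi$ here is odd (rather than merely nondecreasing with $\varphi(0)=0$ as in the non-symmetric setting), one must handle the product $\varphi(x)\check{L}_0(\mathbf{1}_A)=0$ carefully across both signs. The hypothesis that $f|_{I_+^n}$ \emph{or} $f|_{I_-^n}$ is nonconstant closes the argument cleanly, since each degenerate branch of the case split makes $f$ constant on all of $I^n$, hence constant on both $I_+^n$ and $I_-^n$, contradicting the disjunctive hypothesis.
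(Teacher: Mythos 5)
Your proof is correct, and it takes a somewhat different route from the paper's. The paper does not redo the case analysis of Proposition~\ref{lemma:ds8686}: it first notes that oddness of $f_0$ upgrades the hypothesis ``$f|_{I_+^n}$ or $f|_{I_-^n}$ is nonconstant'' to nonconstancy of both restrictions, then obtains $(i)\Rightarrow(ii)\Rightarrow(iii)$ by applying Proposition~\ref{lemma:ds8686} to the restriction $f|_{I_+^n}$ (which is an ordinary nonconstant quasi-Lov\'asz extension on $I_+$, since $\check{L}$ agrees with $L$ on $\R_+^n$, and odd homogeneity of $f_0$ restricts to weak homogeneity on $I_+^n$), and proves $(iii)\Rightarrow(i)$ by combining Propositions~\ref{lemma:ds8686} and~\ref{lemma:ds8686aa} with the oddness of $\check{L}_0$ and $\varphi$. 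You instead work directly from Lemma~\ref{lemma:asd68} and replay the contradiction/contraposition/normalization scheme of the proof of Proposition~\ref{lemma:ds8686}, supplying two ingredients the paper gets for free from its earlier propositions: the computation $\check{L}_0(\mathbf{1}_A)=\psi(\mathbf{1}_A)-\psi(\mathbf{0})$ (valid since $\mathbf{1}_A^-=\mathbf{0}$, so $\check{L}_{\psi}(\mathbf{1}_A)=L_{\psi}(\mathbf{1}_A)=\psi(\mathbf{1}_A)$), and the observation that vanishing of all these values forces $\psi$, hence $L_{\psi}$, $\check{L}_{\psi}$ and $f$, to be constant (this can alternatively be read off from (\ref{eq:sdfsfd65dsf})). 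Your route buys self-containedness: it sidesteps the need to verify that the restrictions $f|_{I_+^n}$ and $f|_{I_-^n}$ are themselves nonconstant quasi-Lov\'asz extensions on $I_+$ and $I_-$, a verification the paper leaves implicit; the paper's route is shorter given its earlier results and displays the positive/negative symmetry explicitly. Two details you handle cleanly that the paper glosses over: Lemma~\ref{lemma:asd68} already covers both signs of $x$, so no separate negative-branch argument is needed in $(i)\Rightarrow(ii)$ and $(iii)\Rightarrow(i)$; and since $\varphi$ is nondecreasing and odd, $\varphi(1)\neq 0$ forces $\varphi(1)>0$, which is exactly what makes the normalized witness $\varphi/\varphi(1)$ nondecreasing, as required by the definition of odd homogeneity.
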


\begin{proof}
Since $f|_{I_+^n}$ or $f|_{I_-^n}$ is nonconstant and $f_0$ is odd, we have $f_0|_{I_+^n}\not\equiv 0$ and $f|_{I_-^n}\not\equiv 0$.

The implications $(i)\Rightarrow (ii)\Rightarrow (iii)$ follow from Proposition~\ref{lemma:ds8686} and the fact that, if $f_0$ is oddly
homogeneous, then $f_0|_{I_+^n}$ is weakly homogeneous.

Now, assume that $(iii)$ holds. Since $\check{L}_0$ and $\varphi$ are odd, by Propositions~\ref{lemma:ds8686} and \ref{lemma:ds8686aa} we
clearly have $f_0(x\mathbf{1}_A)=\frac{\varphi(x)}{\varphi(1)}\, f_0(\mathbf{1}_A)$ for every $x\in I$ and every $A\subseteq [n]$, which shows
that $(i)$ also holds.
\end{proof}

\begin{remark}
Similarly to Remark~\ref{rem:as9f67}$(b)$, we see that, for any symmetric quasi-Lov\'asz extension $f\colon [-1,1]^n\to\R$, the function $f_0$
is oddly homogeneous.
\end{remark}

\section{Comonotonic modularity}

Recall that a function $f\colon I^n\to\R$ is said to be \emph{modular} (or a \emph{valuation}) if
\begin{equation}\label{eq:sdf75}
f(\bfx)+f(\bfx')=f(\bfx\wedge\bfx')+f(\bfx\vee\bfx')
\end{equation}
for every $\bfx,\bfx'\in I^n$. It was proved (see Topkis \cite[Thm~3.3]{Top78}) that a function $f\colon I^n\to\R$ is modular if and only if it
is \emph{separable}, that is, there exist $n$ functions $f_i\colon I\to\R$, $i\in [n]$, such that $f=\sum_{i\in [n]}f_i$.\footnote{This result
still holds in the more general framework where $f$ is defined on a product of chains.} In particular, any $1$-place function $f\colon I\to\R$
is modular.

Two $n$-tuples $\bfx,\bfx'\in I^n$ are said to be \emph{comonotonic} if there exists $\sigma\in S_n$ such that $\bfx,\bfx'\in I^n_{\sigma}$. A
function $f\colon I^n\to\R$ is said to be \emph{comonotonically modular} (or a \emph{comonotonic valuation}) if (\ref{eq:sdf75}) holds for every
comonotonic $n$-tuples $\bfx,\bfx'\in I^n$. This notion was considered in the special case when $I=[0,1]$ in \cite{MesMes11}. We observe that,
for any function $f\colon I^n\to\R$, condition (\ref{eq:sdf75}) holds for every $\bfx,\bfx'\in I^n$ of the forms $\bfx=x\mathbf{1}_A$ and
$\bfx'=x'\mathbf{1}_A$, where $x,x'\in I$ and $A\subseteq [n]$.

Observe also that, for every $\bfx\in\R^n_+$ and every $c\in\R_+$, we have
$$
\bfx-\bfx\wedge c=[\bfx]_c-[\bfx]_c\wedge c.
$$
This motivates the following definition. We say that a function $f\colon I^n\to\R$, where $I\subseteq\R_+$, is \emph{invariant under horizontal
min-differences} if, for every $\bfx\in I^n$ and every $c\in I$, we have
\begin{equation}\label{eq:HminDif}
f(\bfx)-f(\bfx\wedge c)=f([\bfx]_c)-f([\bfx]_c\wedge c).
\end{equation}
Dually, we say that a function $f\colon I^n\to\R$, where $I\subseteq\R_-$, is \emph{invariant under horizontal max-differences} if, for every
$\bfx\in I^n$ and every $c\in I$, we have
\begin{equation}\label{eq:HmaxDif}
f(\bfx)-f(\bfx\vee c)=f([\bfx]^c)-f([\bfx]^c\vee c).
\end{equation}

\begin{fact}\label{fact:min-max}
Assume $I\subseteq\R_+$. A function $f\colon (-I)^n\to\R$, where $-I=\{-x:x\in I\}$, is invariant under horizontal max-differences if and only
if the function $f'\colon I^n\to\R$, defined by $f'(\bfx)=f(-\bfx)$ for every $\bfx\in I^n$, is invariant under horizontal min-differences.
\end{fact}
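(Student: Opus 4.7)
The plan is to establish the equivalence by the substitution $\bfx\mapsto -\bfx$, $c\mapsto -c$, which is a bijection between $I^n\times I$ and $(-I)^n\times (-I)$, and to check that under this substitution the horizontal max-difference identity for $f$ transforms term-by-term into the horizontal min-difference identity for $f'$ (and vice versa). Since $f'(\bfx)=f(-\bfx)$, this reduces the claim to a few pointwise identities involving the lattice operations and the truncation brackets.

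First I would record the three key identities: for every $\bfx\in I^n$ and $c\in I$,
\begin{equation*}
(-\bfx)\vee(-c) = -(\bfx\wedge c), \qquad [-\bfx]^{-c} = -[\bfx]_c, \qquad [-\bfx]^{-c}\vee(-c) = -([\bfx]_c\wedge c).
\end{equation*}
The first is a standard duality between $\wedge$ and $\vee$. The second follows directly from the definitions of $[\,\cdot\,]^{c}$ and $[\,\cdot\,]_{c}$: the $i$th component of $[-\bfx]^{-c}$ is $0$ if $-x_i\geqslant -c$, that is, if $x_i\leqslant c$, and equals $-x_i$ otherwise, which is exactly the negative of the $i$th component of $[\bfx]_c$. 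The third combines the first two.

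Next I would substitute $\bfx\mapsto -\bfx$ and $c\mapsto -c$ in the horizontal max-difference equation \eqref{eq:HmaxDif} written for $f$. The left-hand side becomes $f(-\bfx)-f(-(\bfx\wedge c))=f'(\bfx)-f'(\bfx\wedge c)$, while the right-hand side becomes $f(-[\bfx]_c)-f(-([\bfx]_c\wedge c))=f'([\bfx]_c)-f'([\bfx]_c\wedge c)$. Since the substitution is a bijection, the horizontal max-difference property of $f$ is equivalent to the horizontal min-difference property of $f'$.

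The converse direction is obtained by reading the same chain of equalities backwards, or equivalently by applying the same argument to the function $g\colon I^n\to\R$ defined by $g(\bfx)=f'(-\bfx)$, which coincides with $f$. There is no real obstacle; the only point demanding a moment of care is to verify that the interval conditions are preserved, namely that $\bfx\in I^n$ implies $-\bfx\in(-I)^n$ and that $c\in I$ implies $-c\in -I$, so that both sides of the substituted equation are well defined.
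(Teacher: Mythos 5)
Your proof is correct and follows exactly the intended route: the paper states this as a \emph{Fact} without proof, the reflection $(\bfx,c)\mapsto(-\bfx,-c)$ together with the identities $(-\bfx)\vee(-c)=-(\bfx\wedge c)$ and $[-\bfx]^{-c}=-[\bfx]_c$ being regarded as immediate from the definitions. Your three recorded identities and the bijectivity remark make this implicit argument fully explicit, with no gaps.
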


We observe that, for any function $f\colon I^n\to\R$, where $I\subseteq\R_+$, condition (\ref{eq:HminDif}) holds for every $\mathbf{x}\in I^n$
of the form $\mathbf{x}=x\mathbf{1}_A$, where $x\in I$ and $A\subseteq [n]$. Dually, for any function $f\colon I^n\to\R$, where
$I\subseteq\R_-$, condition (\ref{eq:HmaxDif}) holds for every tuple $\mathbf{x}\in I^n$ of the form $\mathbf{x}=x\mathbf{1}_A$, where $x\in I$
and $A\subseteq [n]$.

We also observe that a function $f$ is comonotonically modular (resp.\ invariant under horizontal min-differences, invariant under horizontal
max-differences) if and only if so is the function $f_0$.

\begin{theorem}\label{thm:horizontaldiff}
Assume $I\subseteq\R_+$ and let $f\colon I^n\to\R$ be a function. Then the following assertions are equivalent.
\begin{enumerate}
\item[$(i)$] $f$ is comonotonically modular.

\item[$(ii)$] $f$ is invariant under horizontal min-differences.

\item[$(iii)$] There exists a function $g\colon I^n\to\R$ such that, for every $\sigma\in S_n$ and every $\mathbf{x}\in I^n_\sigma$, we have
\begin{equation}\label{eq:H2a}
f(\mathbf{x})=g(\mathbf{0})+\sum_{i\in [n]}\big
(g(x_{\sigma(i)}\mathbf{1}_{A_{\sigma}^{\uparrow}(i)})-g(x_{\sigma(i)}\mathbf{1}_{A_{\sigma}^{\uparrow}(i+1)})\big).
\end{equation}
In this case, we can choose $g=f$.
\end{enumerate}
\end{theorem}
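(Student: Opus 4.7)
The plan is to prove the cyclic chain $(i)\Rightarrow (ii)\Rightarrow (iii)\Rightarrow (i)$, choosing $g=f$ in the $(ii)\Rightarrow (iii)$ step.

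For $(i)\Rightarrow (ii)$, I would fix $\bfx\in I^n_\sigma$ and $c\in I\subseteq\R_+$ and consider the pair $\bfx\wedge c$ and $[\bfx]_c$. A componentwise check, splitting according to whether $x_{\sigma(i)}\leqslant c$ or $x_{\sigma(i)}>c$, shows that both of these vectors still lie in $I^n_\sigma$ (on the ``low'' coordinates they are constant equal to $c$ resp.\ to $0$, on the ``high'' coordinates they coincide with $\bfx$), and moreover that
$$
(\bfx\wedge c)\vee [\bfx]_c = \bfx,\qquad (\bfx\wedge c)\wedge [\bfx]_c = [\bfx]_c\wedge c.
$$
Applying comonotonic modularity to the comonotonic pair $(\bfx\wedge c,[\bfx]_c)$ and rearranging yields (\ref{eq:HminDif}).

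For $(ii)\Rightarrow (iii)$, set $g=f$ and, by relabeling, assume $\sigma$ is the identity, so $0\leqslant x_1\leqslant\cdots\leqslant x_n$. The idea is to invoke (\ref{eq:HminDif}) successively with $c=x_1,x_2,\ldots,x_n$, peeling off one level at a time: at step $k$ the invariance applied to the current residual vector (which has $0$s in positions already processed and $x_i$ elsewhere) with $c=x_k$ exchanges $f(\,\cdot\,)-f(x_k\mathbf{1}_{A_\sigma^\uparrow(k)})$ for $f(\,\cdot\,)-f(x_k\mathbf{1}_{A_\sigma^\uparrow(k+1)})$ evaluated on the next residual vector. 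Iterating $n$ times and telescoping produces exactly the sum in (\ref{eq:H2a}). The subtlety, and the main technical point of the proof, is to manage ties and the case $x_1=0$: when a block of consecutive $x_i$ share a common value $v$, one application of invariance with $c=v$ collapses the corresponding block of summands, but those summands themselves telescope internally to the same quantity; and when $x_1=0$, invariance with $c=0$ is a tautology while the associated summand is automatically $0$, so no step is lost.

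For $(iii)\Rightarrow (i)$, take comonotonic $\bfx,\bfx'\in I^n_\sigma$ and note that $\bfx\wedge\bfx'$ and $\bfx\vee\bfx'$ also lie in $I^n_\sigma$, so (\ref{eq:H2a}) applies to all four vectors with the same $\sigma$. Matching terms index by index, the identity $f(\bfx)+f(\bfx')=f(\bfx\wedge\bfx')+f(\bfx\vee\bfx')$ reduces to showing
$$
g(y\mathbf{1}_A)+g(y'\mathbf{1}_A)=g((y\wedge y')\mathbf{1}_A)+g((y\vee y')\mathbf{1}_A)
$$
for every $A\subseteq [n]$ and every $y,y'\in I$. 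But this is immediate, since $t\mapsto g(t\mathbf{1}_A)$ is a $1$-place function and every such function is modular (as recalled at the beginning of this section, where $\{y\vee y',y\wedge y'\}=\{y,y'\}$ as multisets).
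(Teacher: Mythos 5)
Your proposal is correct, and in one step it takes a genuinely different route from the paper. The paper never proves $(i)\Rightarrow(ii)$ directly: it establishes the two equivalences $(i)\Leftrightarrow(iii)$ and $(ii)\Leftrightarrow(iii)$, deriving the representation (\ref{eq:H2a}) with $g=f$ once from comonotonic modularity (applied to the comonotonic pairs $x_{\sigma(i)}\mathbf{1}_{A_{\sigma}^{\uparrow}(i)}$ and $\bfx^0_{A_{\sigma}^{\downarrow}(i)}$, yielding a telescoping identity) and once from invariance under horizontal min-differences (the peeling argument with tie blocks, which is exactly your $(ii)\Rightarrow(iii)$ step), and then verifying both converses by expanding via (\ref{eq:H2a}) and using that (\ref{eq:sdf75}), resp.\ (\ref{eq:HminDif}), holds trivially on tuples of the form $x\mathbf{1}_A$ (your $(iii)\Rightarrow(i)$ step is the same computation). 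Your replacement of the detour $(i)\Rightarrow(iii)\Rightarrow(ii)$ by the direct implication $(i)\Rightarrow(ii)$ is a nice structural observation: condition (\ref{eq:HminDif}) is literally a single instance of modularity applied to the comonotonic pair $(\bfx\wedge c,[\bfx]_c)$, since
$$
(\bfx\wedge c)\vee [\bfx]_c=\bfx \quad\text{and}\quad (\bfx\wedge c)\wedge [\bfx]_c=[\bfx]_c\wedge c,
$$
where both membership of $[\bfx]_c$ in $\R^n_{\sigma}$ and the meet identity use the standing hypotheses $c\geqslant 0$ and $x_i\geqslant 0$ (i.e., $I\subseteq\R_+$), which your componentwise check correctly invokes. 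This buys a shorter cyclic chain $(i)\Rightarrow(ii)\Rightarrow(iii)\Rightarrow(i)$ and an explanation of \emph{why} invariance under horizontal min-differences is a consequence of comonotonic modularity, at the cost of proving $(i)\Rightarrow(iii)$ only indirectly; the paper's arrangement instead exhibits each of $(i)$ and $(ii)$ as independently equivalent to the representation $(iii)$. Your handling of the two delicate points in $(ii)\Rightarrow(iii)$ --- collapsing tie blocks by internal telescoping, and the vacuity of the step $c=0$ (where (\ref{eq:HminDif}) degenerates to $f(\bfx)=f([\bfx]_0)$ with $[\bfx]_0=\bfx$) --- matches the paper's treatment, and since you take $g=f$ there, the final claim ``we can choose $g=f$'' is also secured.
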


\begin{proof}
$(i)\Rightarrow (iii)$ Let $\sigma\in S_n$ and $\bfx\in I^n_{\sigma}$. By comonotonic modularity, for every $i\in [n-1]$ we have
$$
f(x_{\sigma(i)}\mathbf{1}_{A_{\sigma}^{\uparrow}(i)})+f(\bfx^0_{A_{\sigma}^{\downarrow}(i)})=f(x_{\sigma(i)}\mathbf{1}_{A_{\sigma}^{\uparrow}(i+1)})
+f(\bfx^0_{A_{\sigma}^{\downarrow}(i-1)}),
$$
that is,
\begin{equation}\label{eq:sa8f76}
f(\bfx^0_{A_{\sigma}^{\downarrow}(i-1)}) =
\big(f(x_{\sigma(i)}\mathbf{1}_{A_{\sigma}^{\uparrow}(i)})-f(x_{\sigma(i)}\mathbf{1}_{A_{\sigma}^{\uparrow}(i+1)})\big)+f(\bfx^0_{A_{\sigma}^{\downarrow}(i)}).
\end{equation}
By using (\ref{eq:sa8f76}) for $i=1,\ldots,n-1$, we obtain (\ref{eq:H2a}) with $g=f$.

$(iii)\Rightarrow (i)$ For every $\sigma\in S_n$ and every $\bfx,\bfx'\in I^n_\sigma$, we have
\begin{eqnarray*}
f_0(\bfx)+f_0(\bfx') &=& \sum_{i\in [n]}\big(g(x_{\sigma(i)}\mathbf{1}_{A_{\sigma}^{\uparrow}(i)})+g(x'_{\sigma(i)}\mathbf{1}_{A_{\sigma}^{\uparrow}(i)})\big)\\
&& \null -\sum_{i\in
[n]}\big(g(x_{\sigma(i)}\mathbf{1}_{A_{\sigma}^{\uparrow}(i+1)})+g(x'_{\sigma(i)}\mathbf{1}_{A_{\sigma}^{\uparrow}(i+1)})\big)
\end{eqnarray*}
and, since $g$ satisfies property (\ref{eq:sdf75}) for every $\bfx,\bfx'\in I^n$ of the forms $\bfx=x\mathbf{1}_A$ and $\bfx'=x'\mathbf{1}_A$,
where $x,x'\in I$ and $A\subseteq [n]$, we have that $(i)$ holds.

$(ii)\Rightarrow (iii)$ Let $\sigma\in S_n$ and $\bfx\in I^n_{\sigma}$. There exists $p\in [n]$ such that $x_{\sigma(1)}=\cdots =x_{\sigma(p)}<
x_{\sigma(p+1)}$.\footnote{Here $x_{\sigma(n+1)}=+\infty$.} Then, using (\ref{eq:HminDif}) with $c=x_{\sigma(1)}$, we get
$$
f(\mathbf{x})-f(x_{\sigma(1)} \mathbf{1}_{A_{\sigma}^{\uparrow}(1)}) = f(\mathbf{x}_{A_{\sigma}^{\downarrow}(p)}^0)-f(x_{\sigma(1)}
\mathbf{1}_{A_{\sigma}^{\uparrow}(p+1)}).
$$
Using a telescoping sum and the fact that $x_{\sigma(1)}=\cdots =x_{\sigma(p)}$, we obtain
\begin{eqnarray}
f(\mathbf{x}) &=& \big(f(x_{\sigma(1)} \mathbf{1}_{A_{\sigma}^{\uparrow}(1)}) - f(x_{\sigma(1)}
\mathbf{1}_{A_{\sigma}^{\uparrow}(p+1)})\big)+f(\mathbf{x}_{A_{\sigma}^{\downarrow}(p)}^0)\nonumber\\
&=& \sum_{i=1}^p \big(f(x_{\sigma(i)} \mathbf{1}_{A_{\sigma}^{\uparrow}(i)}) - f(x_{\sigma(i)}
\mathbf{1}_{A_{\sigma}^{\uparrow}(i+1)})\big)+f(\mathbf{x}_{A_{\sigma}^{\downarrow}(p)}^0).\label{eq:sdf67}
\end{eqnarray}
If $p=n-1$ or $p=n$, then (\ref{eq:H2a}) holds with $g=f$. Otherwise, there exists $q\in [n-p]$ such that $x_{\sigma(p+1)}=\cdots
=x_{\sigma(p+q)}< x_{\sigma(p+q+1)}$ and we expand the last term in (\ref{eq:sdf67}) similarly by using (\ref{eq:HminDif}) with
$c=x_{\sigma(p+1)}$. We then repeat this procedure until the last term is $f(\mathbf{0})$, thus obtaining (\ref{eq:H2a}) with $g=f$.

To illustrate, suppose $x_1<x_2=x_3<x_4$. Then
$$
f(x_1,x_2,x_3,x_4) = \big(f(x_1,x_1,x_1,x_1)-f(0,x_1,x_1,x_1)\big)+f(0,x_2,x_3,x_4)
$$
with
\begin{eqnarray*}
f(0,x_2,x_3,x_4) 
&=& \big(f(0,x_2,x_2,x_2)-f(0,0,x_2,x_2)\big)\\
&& \null +\big(f(0,0,x_3,x_3)-f(0,0,0,x_3)\big) +f(0,0,0,x_4)
\end{eqnarray*}
and
$$
f(0,0,0,x_4) = \big(f(0,0,0,x_4)-f(0,0,0,0)\big)+f(0,0,0,0).
$$

$(iii)\Rightarrow (ii)$ For every $\sigma\in S_n$, every $\mathbf{x}\in I^n_\sigma$, and every $c\in I$, we have
\begin{eqnarray*}
f(\bfx)-f(\bfx\wedge c) &=& \sum_{i\in [n]}\big(g(x_{\sigma(i)}\mathbf{1}_{A_{\sigma}^{\uparrow}(i)})-g((x_{\sigma(i)}\wedge
c)\mathbf{1}_{A_{\sigma}^{\uparrow}(i)})\big)\\
&& \null -\sum_{i\in [n]}\big(g(x_{\sigma(i)}\mathbf{1}_{A_{\sigma}^{\uparrow}(i+1)})-g((x_{\sigma(i)}\wedge
c)\mathbf{1}_{A_{\sigma}^{\uparrow}(i+1)})\big)
\end{eqnarray*}
and, since $g$ satisfies property (\ref{eq:HminDif}) for every $\mathbf{x}\in I^n$ of the form $\mathbf{x}=x\mathbf{1}_A$, where $x\in I$ and
$A\subseteq [n]$, we have that $(ii)$ holds.
\end{proof}

\begin{remark}
The equivalence between $(i)$ and $(iii)$ in Theorem~\ref{thm:horizontaldiff} generalizes Theorem~1 in \cite{MesMes11}, which describes the
class of comonotonically modular functions $f\colon [0,1]^n\to [0,1]$ under the additional conditions of symmetry and idempotence.
\end{remark}

The following theorem is the negative counterpart of Theorem \ref{thm:horizontaldiff} and its proof follows dually by taking into account
Fact~\ref{fact:min-max}.

\begin{theorem}\label{thm:horizontalmaxdiff}
Assume $I\subseteq\R_-$ and let $f\colon I^n\to\R$ be a function. Then the following assertions are equivalent.
\begin{enumerate}
\item[$(i)$] $f$ is comonotonically modular.

\item[$(ii)$] $f$ is invariant under horizontal max-differences.

\item[$(iii)$] There exists a function $g\colon I^n\to\R$ such that, for every $\sigma\in S_n$ and every $\mathbf{x}\in I^n_\sigma$, we have
$$
f(\mathbf{x})=g(\mathbf{0})+\sum_{i\in [n]}\big
(g(x_{\sigma(i)}\mathbf{1}_{A_{\sigma}^{\downarrow}(i)})-g(x_{\sigma(i)}\mathbf{1}_{A_{\sigma}^{\downarrow}(i-1)})\big).
$$
In this case, we can choose $g=f$.
\end{enumerate}
\end{theorem}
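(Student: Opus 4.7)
The plan is to reduce Theorem~\ref{thm:horizontalmaxdiff} to Theorem~\ref{thm:horizontaldiff} via the reflection $\bfx\mapsto -\bfx$. Set $-I=\{-x:x\in I\}\subseteq\R_+$ and define $f'\colon (-I)^n\to\R$ by $f'(\bfx)=f(-\bfx)$, so that Theorem~\ref{thm:horizontaldiff} is directly applicable to $f'$. The crucial observation is that $\bfx\in I^n_\sigma$ if and only if $-\bfx\in (-I)^n_{\sigma^{\mathrm{rev}}}$, where $\sigma^{\mathrm{rev}}(i)=\sigma(n+1-i)$, and that the assignment $\sigma\mapsto\sigma^{\mathrm{rev}}$ is a bijection of $S_n$.

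For the equivalence $(i)\Leftrightarrow (ii)$, I would first observe that negation interchanges $\wedge$ and $\vee$, so $f(\bfx)+f(\bfx')=f(\bfx\wedge\bfx')+f(\bfx\vee\bfx')$ on $I^n_\sigma\times I^n_\sigma$ is equivalent to $f'(\bfy)+f'(\bfy')=f'(\bfy\wedge\bfy')+f'(\bfy\vee\bfy')$ on $(-I)^n_{\sigma^{\mathrm{rev}}}\times(-I)^n_{\sigma^{\mathrm{rev}}}$. Hence $f$ is comonotonically modular iff $f'$ is. By Fact~\ref{fact:min-max}, $f$ is invariant under horizontal max-differences iff $f'$ is invariant under horizontal min-differences. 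The equivalence $(i)\Leftrightarrow (ii)$ of Theorem~\ref{thm:horizontaldiff} then transports to $f$.

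For $(iii)$, I would apply the representation~\eqref{eq:H2a} to $f'$ with $g'=f'$: for every $\tau\in S_n$ and every $\bfy\in (-I)^n_\tau$,
$$
f'(\bfy)=f'(\mathbf{0})+\sum_{i\in [n]}\bigl(f'(y_{\tau(i)}\mathbf{1}_{A_\tau^{\uparrow}(i)})-f'(y_{\tau(i)}\mathbf{1}_{A_\tau^{\uparrow}(i+1)})\bigr).
$$
Writing $\bfx=-\bfy\in I^n_\sigma$ with $\tau=\sigma^{\mathrm{rev}}$, the key bookkeeping identities are
$$
A_\tau^{\uparrow}(i)=A_\sigma^{\downarrow}(n+1-i),\qquad A_\tau^{\uparrow}(i+1)=A_\sigma^{\downarrow}(n-i),\qquad y_{\tau(i)}=-x_{\sigma(n+1-i)}.
$$
After the reindexing $j=n+1-i$, the sum transforms into exactly the formula of Theorem~\ref{thm:horizontalmaxdiff}$(iii)$ with $g=f$. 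The converse direction $(iii)\Rightarrow (i)$ mirrors the corresponding step in Theorem~\ref{thm:horizontaldiff}: since any sum of $1$-place functions of the form $g(x_{\sigma(i)}\mathbf{1}_A)$ trivially satisfies~\eqref{eq:sdf75} on comonotonic tuples $x\mathbf{1}_A,x'\mathbf{1}_A$, expanding $f(\bfx)+f(\bfx')$ and $f(\bfx\wedge\bfx')+f(\bfx\vee\bfx')$ via the representation shows that both are equal.

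The only mildly delicate point is the index book-keeping that converts $A_\tau^{\uparrow}$ under the reversed permutation into $A_\sigma^{\downarrow}$ under the original one; everything else is a mechanical transport of Theorem~\ref{thm:horizontaldiff} through the sign change, consistent with the authors' remark that the proof follows dually from Fact~\ref{fact:min-max}.
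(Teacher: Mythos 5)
Your proposal is correct and is essentially the paper's own proof: the authors dispose of Theorem~\ref{thm:horizontalmaxdiff} in a single line, stating that it ``follows dually by taking into account Fact~\ref{fact:min-max}'', and your reflection argument $\bfx\mapsto-\bfx$ together with the reversed permutation $\sigma^{\mathrm{rev}}$ is precisely the duality they intend. Your bookkeeping identities $A_{\sigma^{\mathrm{rev}}}^{\uparrow}(i)=A_{\sigma}^{\downarrow}(n+1-i)$ and $y_{\sigma^{\mathrm{rev}}(i)}=-x_{\sigma(n+1-i)}$ check out, so you have simply made explicit the details the paper leaves to the reader.
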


We observe that if $f\colon I^n\to\R$ is comonotonically modular then necessarily
\begin{equation}\label{eq:sdf75dd}
f_0(\bfx)=f_0(\bfx^+)+f_0(-\bfx^-)
\end{equation}
(take $\bfx'=\mathbf{0}$ in (\ref{eq:sdf75})).
%

We may now characterize the class of comonotonically modular functions on an arbitrary interval $I$ containing $0$.

\begin{theorem}\label{thm:wqer87we}
For any function $f\colon I^n\to\R$, the following assertions are equivalent.
\begin{enumerate}
\item[$(i)$] $f$ is comonotonically modular.

\item[$(ii)$] There exist $g\colon I_+^n\to\R$ comonotonically modular (or invariant under horizontal min-differences) and $h\colon I_-^n\to\R$
comonotonically modular (or invariant under horizontal max-differences) such that $f_0(\bfx)=g_0(\bfx^+)+h_0(-\bfx^-)$ for every $\bfx\in I^n$.
In this case, we can choose $g=f|_{I_+^n}$ and $h=f|_{I_-^n}$.

\item[$(iii)$] There exist $g\colon I_+^n\to\R$ and $h\colon I_-^n\to\R$ such that, for every $\sigma\in S_n$ and every $\bfx\in I^n_{\sigma}$,
\begin{eqnarray*}
f_0(\bfx) &=& \sum_{1\leqslant i\leqslant p}\big
(h(x_{\sigma(i)}\mathbf{1}_{A_{\sigma}^{\downarrow}(i)})-h(x_{\sigma(i)}\mathbf{1}_{A_{\sigma}^{\downarrow}(i-1)})\big)\\
&& \null + \sum_{p+1\leqslant i\leqslant n}\big
(g(x_{\sigma(i)}\mathbf{1}_{A_{\sigma}^{\uparrow}(i)})-g(x_{\sigma(i)}\mathbf{1}_{A_{\sigma}^{\uparrow}(i+1)})\big),
\end{eqnarray*}
where $p\in\{0,\ldots,n\}$ is such that $x_{\sigma(p)}<0\leqslant x_{\sigma(p+1)}$. In this case, we can choose $g=f|_{I_+^n}$ and
$h=f|_{I_-^n}$.
\end{enumerate}
\end{theorem}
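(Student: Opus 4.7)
The plan is to prove the cycle $(i) \Rightarrow (ii) \Rightarrow (iii) \Rightarrow (i)$, leveraging Theorems~\ref{thm:horizontaldiff} and \ref{thm:horizontalmaxdiff} to handle $I_+^n$ and $I_-^n$ separately, with the identity $(\ref{eq:sdf75dd})$ acting as the bridge. The closing implication is the main technical step, and I expect the key insight to be that the representation in $(iii)$ decomposes into $1$-place contributions indexed by coordinates, for which modularity is automatic.

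For $(i) \Rightarrow (ii)$, I would set $g = f|_{I_+^n}$ and $h = f|_{I_-^n}$. Any comonotonic pair in $I_+^n$ (resp.\ $I_-^n$) is a fortiori comonotonic in $I^n$, so both $g$ and $h$ inherit comonotonic modularity, equivalently invariance under horizontal min- or max-differences by Theorems~\ref{thm:horizontaldiff} and \ref{thm:horizontalmaxdiff}. Since $\bfx^+ \in I_+^n$ and $-\bfx^- \in I_-^n$, the identity $(\ref{eq:sdf75dd})$ rewrites as $f_0(\bfx) = g_0(\bfx^+) + h_0(-\bfx^-)$.

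For $(ii) \Rightarrow (iii)$, I apply Theorem~\ref{thm:horizontaldiff}$(iii)$ to $g$ and Theorem~\ref{thm:horizontalmaxdiff}$(iii)$ to $h$. Given $\bfx \in I^n_\sigma$ with split index $p$ (so $x_{\sigma(i)} < 0$ for $i \leqslant p$ and $x_{\sigma(i)} \geqslant 0$ for $i > p$), the vector $\bfx^+$ lies in $I_+^n \cap I^n_\sigma$ with $(\bfx^+)_{\sigma(i)} = 0$ for $i \leqslant p$; the corresponding terms in the $\uparrow$-expansion of $g_0(\bfx^+)$ vanish since $g(\mathbf{0}) - g(\mathbf{0}) = 0$, leaving precisely the $g$-sum over $p+1 \leqslant i \leqslant n$ displayed in $(iii)$. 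A symmetric computation for $-\bfx^- \in I_-^n \cap I^n_\sigma$ using the $\downarrow$-expansion of $h_0$ yields the $h$-sum over $1 \leqslant i \leqslant p$, and adding the two pieces delivers the formula.

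For $(iii) \Rightarrow (i)$, for each $\sigma \in S_n$ and each $i \in [n]$ define the $1$-place function $\phi_{\sigma,i} \colon I \to \R$ by $\phi_{\sigma,i}(x) = h(x\mathbf{1}_{A_\sigma^\downarrow(i)}) - h(x\mathbf{1}_{A_\sigma^\downarrow(i-1)})$ when $x < 0$ and $\phi_{\sigma,i}(x) = g(x\mathbf{1}_{A_\sigma^\uparrow(i)}) - g(x\mathbf{1}_{A_\sigma^\uparrow(i+1)})$ when $x \geqslant 0$; both branches agree at $x = 0$, where they yield $0$. The formula in $(iii)$ then reads $f_0(\bfx) = \sum_{i \in [n]} \phi_{\sigma,i}(x_{\sigma(i)})$ for $\bfx \in I^n_\sigma$, with the correct branch automatically selected by the sign of the argument. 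For any comonotonic pair $\bfx, \bfx' \in I^n_\sigma$, the vectors $\bfx \wedge \bfx'$ and $\bfx \vee \bfx'$ again lie in $I^n_\sigma$ with coordinates $x_{\sigma(i)} \wedge x'_{\sigma(i)}$ and $x_{\sigma(i)} \vee x'_{\sigma(i)}$, so the representation applies to all four vectors. Since every $1$-place function is trivially modular (the multisets $\{x, x'\}$ and $\{x \wedge x', x \vee x'\}$ coincide), summing
$$
\phi_{\sigma,i}(x_{\sigma(i)}) + \phi_{\sigma,i}(x'_{\sigma(i)}) = \phi_{\sigma,i}(x_{\sigma(i)} \wedge x'_{\sigma(i)}) + \phi_{\sigma,i}(x_{\sigma(i)} \vee x'_{\sigma(i)})
$$
over $i$ yields $f_0(\bfx) + f_0(\bfx') = f_0(\bfx \wedge \bfx') + f_0(\bfx \vee \bfx')$, hence comonotonic modularity of $f$. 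The main delicate point is that the split index $p$ may differ across the four vectors $\bfx, \bfx', \bfx \wedge \bfx', \bfx \vee \bfx'$, but this is absorbed into the piecewise definition of $\phi_{\sigma,i}$, whose two branches match at $x = 0$.
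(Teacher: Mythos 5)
Your proof is correct, and its first two implications coincide with the paper's: the paper proves $(i)\Rightarrow(ii)$ and $(ii)\Rightarrow(iii)$ by simply citing (\ref{eq:sdf75dd}) and Theorems~\ref{thm:horizontaldiff} and \ref{thm:horizontalmaxdiff}, and your observation that the terms indexed by the zero coordinates of $\bfx^+$ and $-\bfx^-$ vanish is exactly the computation left implicit there. Where you genuinely depart is $(iii)\Rightarrow(i)$. The paper first notes that the representation forces $f_0(\bfx)=f_0(\bfx^+)+f_0(-\bfx^-)$, then splits $f_0(\bfx)+f_0(\bfx')$ into positive and negative parts, applies the half-line Theorems~\ref{thm:horizontaldiff} and \ref{thm:horizontalmaxdiff} to each part, and recombines via the lattice identities $\bfx^+\wedge\bfx'^+=(\bfx\wedge\bfx')^+$, $-\bfx^-\vee(-\bfx'^-)=-(\bfx\vee\bfx')^-$, etc. You instead read \emph{comonotonic separability} directly off the representation: your piecewise functions $\phi_{\sigma,i}$ (the $h$-branch for $x<0$, the $g$-branch for $x\geqslant 0$) yield $f_0(\bfx)=\sum_{i\in[n]}\phi_{\sigma,i}(x_{\sigma(i)})$ on all of $I^n_\sigma$ — the sign-based branching matches the definition of $p$ via $x_{\sigma(p)}<0\leqslant x_{\sigma(p+1)}$, so a zero coordinate always falls in the $g$-branch and both formal expressions vanish at $0$ — and then modularity of each $1$-place function is trivial because $\{x,x'\}$ and $\{x\wedge x',x\vee x'\}$ coincide as multisets. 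This is precisely the mechanism the paper deploys only later, in the sufficiency half of Corollary~\ref{cor:8sa67}, so in effect you prove that corollary's sufficiency direction en route and specialize it. Your route is somewhat more elementary (no need to verify (\ref{eq:sdf75dd}) for $f_0$ or manipulate positive/negative parts of meets and joins), and the one delicate point — that the split index $p$ differs among $\bfx,\bfx',\bfx\wedge\bfx',\bfx\vee\bfx'$ — you correctly identify and absorb into the sign-driven definition of $\phi_{\sigma,i}$; the paper's route, by contrast, keeps the half-line theorems as the single workhorse and makes the later separability corollary an immediate byproduct rather than an ingredient.
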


\begin{proof}
$(i)\Rightarrow (ii)$ Follows from (\ref{eq:sdf75dd}) and Theorems~\ref{thm:horizontaldiff} and \ref{thm:horizontalmaxdiff}.

$(ii)\Rightarrow (iii)$ Follows from Theorems~\ref{thm:horizontaldiff} and \ref{thm:horizontalmaxdiff}.

$(iii)\Rightarrow (i)$ Clearly, $f_0$ satisfies (\ref{eq:sdf75dd}). Let $\sigma\in S_n$ and $\bfx,\bfx'\in I^n_{\sigma}$. By (\ref{eq:sdf75dd})
we have
$$
f_0(\bfx)+f_0(\bfx')=f_0(\bfx^+)+f_0(\bfx'^+)+f_0(-\bfx^-)+f(-\bfx'^-)
$$
Using Theorems~\ref{thm:horizontaldiff} and \ref{thm:horizontalmaxdiff}, we see that this identity can be rewritten as
\begin{eqnarray*}
f_0(\bfx)+f_0(\bfx') &=& f_0(\bfx^+\wedge\bfx'^+)+f_0(\bfx^+\vee\bfx'^+)+f_0(-\bfx^-\wedge -\bfx'^-)+f_0(-\bfx^-\vee -\bfx'^-)\\
&=& f_0\big((\bfx\wedge\bfx')^+\big)+f_0\big((\bfx\vee\bfx')^+\big)+f_0\big(-(\bfx\wedge\bfx')^-\big)+f_0\big(-(\bfx\vee\bfx')^-\big),
\end{eqnarray*}
which, by (\ref{eq:sdf75dd}), becomes $f_0(\bfx)+f_0(\bfx')=f_0(\bfx\wedge\bfx')+f_0(\bfx\vee\bfx')$. Therefore, $f_0$ is comonotonically
modular and, hence, so is $f$.
\end{proof}

From Theorem~\ref{thm:wqer87we} we obtain the ``comonotonic'' analogue of Topkis' characterization \cite{Top78} of modular functions as
separable functions, and which provides an alternative description of comonotonically modular functions. We make use of the following fact.

\begin{fact}\label{fact:f78}
Let $J$ be any nonempty real interval, possibly unbounded, and let $c\in J$. A function $g\colon J^n\to\R$ is modular (resp.\ comonotonically
modular) if and only if the function $f\colon I^n\to\R$, defined by $f(\bfx)=g(\bfx+c\mathbf{1})$, where $I=J-c=\{z-c:z\in J\}$, is  modular
(resp.\ comonotonically modular).
\end{fact}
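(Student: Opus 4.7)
The plan is to exploit the observation that translation by $c\mathbf{1}$ is a bijective lattice- and comonotonicity-preserving map between $I^n$ and $J^n$. Concretely, define $\tau\colon I^n\to J^n$ by $\tau(\bfx)=\bfx+c\mathbf{1}$; this is a bijection with inverse $\mathbf{y}\mapsto\mathbf{y}-c\mathbf{1}$. The first step is to record that componentwise minima and maxima commute with translation by a constant vector: since $\min(x_i+c,x_i'+c)=\min(x_i,x_i')+c$ and likewise for $\max$, one has
$$
\tau(\bfx\wedge\bfx')=\tau(\bfx)\wedge\tau(\bfx'),\qquad \tau(\bfx\vee\bfx')=\tau(\bfx)\vee\tau(\bfx').
$$

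Next I would handle the modular case. Since $f(\bfx)=g(\tau(\bfx))$, the preceding identities give
$$
f(\bfx)+f(\bfx')-f(\bfx\wedge\bfx')-f(\bfx\vee\bfx')=g(\tau(\bfx))+g(\tau(\bfx'))-g(\tau(\bfx)\wedge\tau(\bfx'))-g(\tau(\bfx)\vee\tau(\bfx'))
$$
for every $\bfx,\bfx'\in I^n$. As $(\bfx,\bfx')$ ranges over $I^n\times I^n$, the pair $(\tau(\bfx),\tau(\bfx'))$ ranges over all of $J^n\times J^n$, so the vanishing of the left-hand side for every $\bfx,\bfx'\in I^n$ is equivalent to the vanishing of the right-hand side for every $\mathbf{y},\mathbf{y}'\in J^n$, i.e., to the modularity of $g$.

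For the comonotonic version, the extra ingredient is the fact that $\tau$ preserves each region $\R^n_\sigma$: one has $x_{\sigma(1)}\leqslant\cdots\leqslant x_{\sigma(n)}$ if and only if $x_{\sigma(1)}+c\leqslant\cdots\leqslant x_{\sigma(n)}+c$, whence $\bfx\in I^n_\sigma$ if and only if $\tau(\bfx)\in J^n_\sigma$ for every $\sigma\in S_n$. Therefore $\bfx,\bfx'\in I^n$ are comonotonic iff $\tau(\bfx),\tau(\bfx')\in J^n$ are comonotonic, and restricting the equivalence from the previous paragraph to comonotonic pairs yields the comonotonic modular version. There is no genuine obstacle: the fact reduces to the coordinate-translation identities for $\wedge$, $\vee$, and the linear order on $\R$, together with the bijectivity of $\tau$.
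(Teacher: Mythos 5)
Your proof is correct. The paper states this as a \emph{Fact} with no proof at all (it is used immediately afterwards to reduce Corollary~\ref{cor:8sa67} to the case $0\in J$), and your argument---that $\bfx\mapsto\bfx+c\mathbf{1}$ is a bijection of $I^n$ onto $J^n$ that commutes with componentwise $\wedge$ and $\vee$ and maps each region $\R^n_\sigma$ onto itself, hence preserves comonotonic pairs---is exactly the routine verification the authors implicitly had in mind.
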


\begin{corollary}\label{cor:8sa67}
Let $J$ be any nonempty real interval, possibly unbounded. A function $f\colon J^n\to\R$ is comonotonically modular if and only if it is
comonotonically separable, that is, for every $\sigma\in S_n$, there exist functions $f^{\sigma}_i\colon J\to\R$, $i\in [n]$, such that
$$
f(\bfx)=\sum_{i=1}^nf^{\sigma}_i(x_{\sigma(i)})=\sum_{i=1}^nf^{\sigma}_{\sigma^{-1}(i)}(x_i),\qquad \bfx\in J^n\cap\R^n_{\sigma}.
$$
\end{corollary}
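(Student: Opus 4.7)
The plan is to derive the corollary directly from Theorem~\ref{thm:wqer87we} combined with the translation Fact~\ref{fact:f78}, after disposing of the easy direction by a componentwise calculation.

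First I would verify that comonotonic separability implies comonotonic modularity, which is the trivial direction. Fix $\sigma\in S_n$ and $\bfx,\bfx'\in J^n\cap\R^n_{\sigma}$. A short monotonicity check shows that $\bfx\wedge\bfx'$ and $\bfx\vee\bfx'$ both lie in $\R^n_{\sigma}$ as well. Writing $f(\bfx)=\sum_i f^{\sigma}_i(x_{\sigma(i)})$ on this region and using the fact that any $1$-place function is modular (so $f^{\sigma}_i(x_{\sigma(i)})+f^{\sigma}_i(x'_{\sigma(i)})=f^{\sigma}_i(x_{\sigma(i)}\wedge x'_{\sigma(i)})+f^{\sigma}_i(x_{\sigma(i)}\vee x'_{\sigma(i)})$), summing over $i$ yields the comonotonic modularity identity.

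For the nontrivial direction, I would first reduce to an interval containing $0$. Pick any $c\in J$ and set $I=J-c$; then $0\in I$ and, by Fact~\ref{fact:f78}, the function $g\colon I^n\to\R$ defined by $g(\bfx)=f(\bfx+c\mathbf{1})$ is comonotonically modular. Now I apply Theorem~\ref{thm:wqer87we}(iii) to $g$, which expresses $g_0(\bfx)$, for $\bfx\in I^n_{\sigma}$, as
$$
g_0(\bfx)=\sum_{1\leqslant i\leqslant p}\Delta^{\downarrow}_i(x_{\sigma(i)})+\sum_{p+1\leqslant i\leqslant n}\Delta^{\uparrow}_i(x_{\sigma(i)}),
$$
where $\Delta^{\downarrow}_i(t)=g|_{I_-^n}(t\mathbf{1}_{A^{\downarrow}_{\sigma}(i)})-g|_{I_-^n}(t\mathbf{1}_{A^{\downarrow}_{\sigma}(i-1)})$ and $\Delta^{\uparrow}_i(t)=g|_{I_+^n}(t\mathbf{1}_{A^{\uparrow}_{\sigma}(i)})-g|_{I_+^n}(t\mathbf{1}_{A^{\uparrow}_{\sigma}(i+1)})$ are genuine $1$-place functions of $t$. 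The crucial observation is that each summand depends on a single coordinate $x_{\sigma(i)}$, and for each index $i$ the choice between $\Delta^{\downarrow}_i$ and $\Delta^{\uparrow}_i$ is determined purely by the sign of $x_{\sigma(i)}$. So I define
$$
g^{\sigma}_i(t)=\begin{cases}\Delta^{\downarrow}_i(t), & t<0,\\ \Delta^{\uparrow}_i(t), & t\geqslant 0,\end{cases}
$$
and absorb the constant $g(\mathbf{0})$ into, say, $g^{\sigma}_1$, obtaining $g(\bfx)=\sum_{i=1}^n g^{\sigma}_i(x_{\sigma(i)})$ on $I^n_{\sigma}$. Translating back, $f^{\sigma}_i(y):=g^{\sigma}_i(y-c)$ yields the required comonotonically separable representation on $J^n\cap\R^n_{\sigma}$, and rewriting the sum by the substitution $j=\sigma(i)$ gives the second displayed form $\sum_{i=1}^n f^{\sigma}_{\sigma^{-1}(i)}(x_i)$.

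There is no genuine obstacle; the only point requiring a little care is recognizing that the representation furnished by Theorem~\ref{thm:wqer87we} is already in a ``one summand per coordinate'' shape once one reads it with the permutation index as the variable bookkeeping, and that the sign-dependent switch between the $\downarrow$ and $\uparrow$ contributions can be packaged into a single $1$-place function $g^{\sigma}_i$ of the real variable $x_{\sigma(i)}$. Everything else is a translation and a regrouping of terms.
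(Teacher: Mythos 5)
Your proposal is correct and takes essentially the same route as the paper: the nontrivial direction via the translation in Fact~\ref{fact:f78} followed by the equivalence $(i)\Leftrightarrow(iii)$ of Theorem~\ref{thm:wqer87we}, and the easy direction by observing that each unary summand is modular (hence comonotonically modular) and summing. The only difference is that you spell out details the paper leaves implicit---the sign-determined packaging of the $\downarrow$/$\uparrow$ increments into a single $1$-place function per index, and the check that $\bfx\wedge\bfx'$ and $\bfx\vee\bfx'$ stay in $\R^n_{\sigma}$---both of which are correct.
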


\begin{proof}
(Necessity) By Fact~\ref{fact:f78} we can assume that $J$ contains the origin. The result then follows from the equivalence $(i)\Leftrightarrow
(iii)$ stated in Theorem~\ref{thm:wqer87we}.

(Sufficiency) For every $\sigma\in S_n$ and every $i\in [n]$, the function $f^{\sigma}_{\sigma^{-1}(i)}$ is clearly modular and hence
comonotonically modular. Since the class of comonotonically modular functions is closed under addition, the proof is now complete.
\end{proof}

\section{Axiomatization and representation of quasi-Lov\'asz extensions}

We now provide axiomatizations of the class of quasi-Lov\'asz extensions and describe all possible factorizations of quasi-Lov\'asz extensions
into compositions of Lov\'asz extensions with $1$-place nondecreasing functions.

\begin{theorem}\label{thm:CanonicalQuasi-Lovasz}
Assume $[0,1]\subseteq I\subseteq\R_+$ and let $f\colon I^n\to\R$ be a nonconstant function. Then the following assertions are equivalent.
\begin{enumerate}
\item[$(i)$] $f$ is a quasi-Lov\'asz extension and there exists $A\subseteq [n]$ such that $f_0(\mathbf{1}_A)\neq 0$.

\item[$(ii)$] $f$ is comonotonically modular (or invariant under horizontal min-differences) and $f_0$ is weakly homogeneous.

\item[$(iii)$] There is a nondecreasing function $\varphi_f\colon I\to\R$ satisfying $\varphi_f(0)=0$ and $\varphi_f(1)=1$ such that
$f=L_{f|_{\B^n}}\circ\varphi_f$.
\end{enumerate}
\end{theorem}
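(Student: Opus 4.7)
The plan is to prove the cycle $(iii)\Rightarrow(i)\Rightarrow(ii)\Rightarrow(iii)$. The first implication is essentially definitional: the factorization $f=L_{f|_{\B^n}}\circ\varphi_f$ exhibits $f$ as a quasi-Lov\'asz extension, and since $\varphi_f(1)=1\neq 0$ and $f$ is nonconstant, the equivalence $(iii)\Leftrightarrow(ii)$ of Proposition~\ref{lemma:ds8686} forces the existence of some $A\subseteq [n]$ with $f_0(\mathbf{1}_A)\neq 0$. For $(i)\Rightarrow(ii)$, weak homogeneity of $f_0$ is given by Proposition~\ref{lemma:ds8686} under the hypothesis $f_0(\mathbf{1}_A)\neq 0$, while comonotonic modularity of $f=L\circ\varphi$ reduces to two simple facts: a Lov\'asz extension $L$ is affine, hence modular, on each subdomain $\R^n_{\sigma}$; and a nondecreasing $\varphi$ applied componentwise preserves comonotonicity and commutes with $\wedge$ and $\vee$. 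Combining these, for comonotonic $\bfx,\bfx'\in I^n_{\sigma}$ one obtains $f(\bfx)+f(\bfx')=L(\varphi(\bfx))+L(\varphi(\bfx'))=L(\varphi(\bfx)\wedge\varphi(\bfx'))+L(\varphi(\bfx)\vee\varphi(\bfx'))=f(\bfx\wedge\bfx')+f(\bfx\vee\bfx')$.

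The substance of the proof lies in $(ii)\Rightarrow(iii)$. The strategy is to start from the structural representation of comonotonically modular functions given by Theorem~\ref{thm:horizontaldiff} and substitute the weak-homogeneity identity to recover the Lov\'asz-extension formula~(\ref{eq:sdfaf678dew}). Before doing so, one needs to normalize: let $\varphi$ be the nondecreasing function with $\varphi(0)=0$ witnessing the weak homogeneity of $f_0$, and claim that some $f_0(\mathbf{1}_A)\neq 0$. Indeed, if $f_0(\mathbf{1}_A)=0$ for every $A$, then weak homogeneity gives $f_0(x\mathbf{1}_A)=0$ for every $x\in I$ and $A\subseteq [n]$, and Theorem~\ref{thm:horizontaldiff} (applied with $g=f$) forces $f_0\equiv 0$, contradicting the nonconstancy of $f$. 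Evaluating the weak-homogeneity identity at $x=1$ for such an $A$ yields $\varphi(1)=1$, so one may set $\varphi_f:=\varphi$.

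With $\varphi_f$ in hand, Theorem~\ref{thm:horizontaldiff} gives, for each $\sigma\in S_n$ and $\bfx\in I^n_{\sigma}$, the identity $f_0(\bfx)=\sum_{i\in[n]}\bigl(f_0(x_{\sigma(i)}\mathbf{1}_{A_{\sigma}^{\uparrow}(i)})-f_0(x_{\sigma(i)}\mathbf{1}_{A_{\sigma}^{\uparrow}(i+1)})\bigr)$, and substituting weak homogeneity in each summand rewrites the right-hand side as $\sum_{i\in[n]}\varphi_f(x_{\sigma(i)})\bigl(f_0(\mathbf{1}_{A_{\sigma}^{\uparrow}(i)})-f_0(\mathbf{1}_{A_{\sigma}^{\uparrow}(i+1)})\bigr)$. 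Since $\varphi_f$ is nondecreasing, $\varphi_f(\bfx)\in\R^n_{\sigma}$, and a comparison with (\ref{eq:sdfaf678dew}) applied to $\psi=f|_{\B^n}$ identifies this expression with $L_{f|_{\B^n}}(\varphi_f(\bfx))-f(\mathbf{0})$; adding back $f(\mathbf{0})$ yields $f=L_{f|_{\B^n}}\circ\varphi_f$. I expect the main obstacle to be precisely this normalization step---extracting a canonical witness $\varphi_f$ with $\varphi_f(1)=1$ from the abstract weak-homogeneity assumption---which the nonvanishing argument above is designed to handle.
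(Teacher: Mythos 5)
Your proof is correct, and in its essential step it coincides with the paper's: you handle $(ii)\Rightarrow(iii)$ exactly as the paper does, by combining the representation of Theorem~\ref{thm:horizontaldiff} (with $g=f$) with the weak-homogeneity identity, comparing with (\ref{eq:sdfaf678dew}) for $\psi=f|_{\B^n}$ (valid since $\varphi_f$ nondecreasing puts $\varphi_f(\bfx)$ in $\R^n_\sigma$), and obtaining the normalization $\varphi_f(1)=1$ from the observation that $f_0(\mathbf{1}_A)=0$ for all $A$ would force $f_0\equiv 0$ through the representation, contradicting nonconstancy; likewise $(iii)\Rightarrow(i)$ goes through Proposition~\ref{lemma:ds8686} in both treatments. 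The one place you genuinely depart from the paper is the modularity half of $(i)\Rightarrow(ii)$: the paper computes $f=L\circ\varphi$ on each $I^n_\sigma$ via (\ref{eq:sdfaf678dew}) together with Lemma~\ref{lemma:sdf7d} (equation (\ref{eq:dsa786})) and then invokes the implication $(iii)\Rightarrow(i)$ of Theorem~\ref{thm:horizontaldiff}, whereas you argue directly that $L$ is affine (hence modular) on each cone $\R^n_\sigma$, that $\mathbf{u}\wedge\mathbf{v}$ and $\mathbf{u}\vee\mathbf{v}$ stay in $\R^n_\sigma$ when $\mathbf{u},\mathbf{v}\in\R^n_\sigma$, and that a nondecreasing $\varphi$ applied componentwise commutes with $\wedge$ and $\vee$; the identity $u_i+v_i=(u_i\wedge v_i)+(u_i\vee v_i)$ then closes the computation. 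Your variant is more elementary and marginally stronger---it shows that \emph{every} quasi-Lov\'asz extension is comonotonically modular, without using Lemma~\ref{lemma:sdf7d} or the nonvanishing hypothesis (which, as in the paper, remains necessary only for weak homogeneity, cf.\ Remark~\ref{rem:as9f67}$(a)$)---while the paper's route reuses machinery it needs anyway and keeps the entire argument flowing through the single representation (\ref{eq:H2a}).
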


\begin{proof}
Let us prove that $(i)\Rightarrow (ii)$. By definition, we have $f=L\circ\varphi$, where $L\colon\R^n\to\R$ is a Lov\'asz extension and
$\varphi\colon I\to\R$ is a nondecreasing function satisfying $\varphi(0)=0$. By Proposition~\ref{lemma:ds8686}, $f_0$ is weakly homogeneous.
Moreover, by (\ref{eq:sdfaf678dew}) and (\ref{eq:dsa786}) we have that, for every $\sigma\in S_n$ and every $\bfx\in I^n_{\sigma}$,
\begin{eqnarray*}
f(\bfx) &=&
f(\mathbf{0})+\sum_{i\in [n]}\varphi(x_{\sigma(i)})\,\big(L_0(\mathbf{1}_{A_{\sigma}^{\uparrow}(i)})-L_0(\mathbf{1}_{A_{\sigma}^{\uparrow}(i+1)})\big)\\
&=& f(\mathbf{0})+\sum_{i\in [n]}\big
(f(x_{\sigma(i)}\mathbf{1}_{A_{\sigma}^{\uparrow}(i)})-f(x_{\sigma(i)}\mathbf{1}_{A_{\sigma}^{\uparrow}(i+1)})\big).
\end{eqnarray*}
Theorem~\ref{thm:horizontaldiff} then shows that $f$ is comonotonically modular.

Let us prove that $(ii)\Rightarrow (iii)$. Since $f$ is comonotonically modular, by Theorem~\ref{thm:horizontaldiff} it follows that, for every
$\sigma\in S_n$ and every $\bfx\in I^n_{\sigma}$,
$$
f(\mathbf{x})=f(\mathbf{0})+\sum_{i\in [n]}\big
(f(x_{\sigma(i)}\mathbf{1}_{A_{\sigma}^{\uparrow}(i)})-f(x_{\sigma(i)}\mathbf{1}_{A_{\sigma}^{\uparrow}(i+1)})\big),
$$
and, since $f_0$ is weakly homogeneous,
\begin{equation}\label{eq:sa9df78}
f(\mathbf{x})=f(\mathbf{0})+\sum_{i\in [n]}\varphi_f(x_{\sigma(i)})\,\big
(f(\mathbf{1}_{A_{\sigma}^{\uparrow}(i)})-f(\mathbf{1}_{A_{\sigma}^{\uparrow}(i+1)})\big)
\end{equation}
for some nondecreasing function $\varphi_f\colon I\to\R$ satisfying $\varphi_f(0)=0$. By (\ref{eq:sdfaf678dew}), we then obtain
$f=L_{f|_{\B^n}}\circ\varphi_f$. Finally, by (\ref{eq:sa9df78}) we have that, for every $A\subseteq [n]$,
$$
f_0(\mathbf{1}_A)=\varphi_f(1)f_0(\mathbf{1}_A).
$$
Since there exists $A\subseteq [n]$ such that $f_0(\mathbf{1}_A)\neq 0$ (for otherwise, we would have $f_0\equiv 0$ by (\ref{eq:sa9df78})), we
obtain $\varphi_f(1)=1$.

The implication $(iii)\Rightarrow (i)$ follows from Proposition~\ref{lemma:ds8686}.
\end{proof}

Let $f\colon I^n\to\R$ be a quasi-Lov\'asz extension, where $[0,1]\subseteq I\subseteq\R_+$, for which there exists $A^*\subseteq [n]$ such that
$f_0(\mathbf{1}_{A^*})\neq 0$. Then the inner function $\varphi_f$ introduced in Theorem~\ref{thm:CanonicalQuasi-Lovasz} is unique. Indeed, by
Proposition~\ref{lemma:ds8686}, we have $f_0(x\mathbf{1}_A)=\varphi_f(x)f_0(\mathbf{1}_A)$ for every $x\in I$ and every $A\subseteq [n]$. The
function $\varphi_f$ is then defined by
$$
\varphi_f(x)=\frac{f_0(x\mathbf{1}_{A^*})}{f_0(\mathbf{1}_{A^*})}~,\qquad x\in I.
$$

We can now describe the possible factorizations of $f$ into compositions of Lov\'asz extensions with nondecreasing functions.

\begin{theorem}\label{thm:6sad8f65}
Assume $[0,1]\subseteq I\subseteq\R_+$ and let $f\colon\ I^n\to\R$ be a quasi-Lov\'asz extension, $f=L\circ\varphi$. Then there exists
$A^*\subseteq [n]$ such that $f_0(\mathbf{1}_{A^*})\neq 0$ if and only if there exists $a>0$ such that $\varphi=a\,\varphi_f$ and $L_0=\frac
1a(L_{f|_{\B^n}})_0$.
\end{theorem}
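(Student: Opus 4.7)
The plan is to derive the uniqueness of the scalar $a>0$ by matching the two quasi-Lov\'asz factorizations of $f$ via Lemma~\ref{lemma:sdf7d}, and then to promote the resulting identity from the vertex set $\{\mathbf{1}_A:A\subseteq [n]\}$ to all of $\R^n$ using the linearity of the operator $\psi\mapsto L_\psi$.

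For the forward direction, I would assume that some $A^*\subseteq [n]$ satisfies $f_0(\mathbf{1}_{A^*})\neq 0$ and invoke Theorem~\ref{thm:CanonicalQuasi-Lovasz} to obtain the canonical factorization $f=L_{f|_{\B^n}}\circ\varphi_f$ with $\varphi_f(0)=0$ and $\varphi_f(1)=1$. Applying Lemma~\ref{lemma:sdf7d} to both $f=L\circ\varphi$ and $f=L_{f|_{\B^n}}\circ\varphi_f$ yields, for every $x\in I$ and every $A\subseteq [n]$,
$$
\varphi(x)\,L_0(\mathbf{1}_A) \;=\; f_0(x\mathbf{1}_A) \;=\; \varphi_f(x)\,(L_{f|_{\B^n}})_0(\mathbf{1}_A).
$$
Evaluating at $x=1$ and $A=A^*$, and noting that $(L_{f|_{\B^n}})_0(\mathbf{1}_{A^*})=f_0(\mathbf{1}_{A^*})\neq 0$, we deduce that both $L_0(\mathbf{1}_{A^*})\neq 0$ and $\varphi(1)\neq 0$; since $\varphi$ is nondecreasing with $\varphi(0)=0$, this forces $a:=\varphi(1)>0$. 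Setting $A=A^*$ in the displayed identity then yields $\varphi=a\,\varphi_f$.

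To recover the relation between $L_0$ and $(L_{f|_{\B^n}})_0$, I would set $x=1$ in the same display to get $a\,L_0(\mathbf{1}_A)=(L_{f|_{\B^n}})_0(\mathbf{1}_A)$ for every $A\subseteq [n]$. Both $L_0$ and $(L_{f|_{\B^n}})_0$ are themselves Lov\'asz extensions (of $L|_{\B^n}-L(\mathbf{0})$ and of $f|_{\B^n}-f(\mathbf{0})$, respectively), and since $\psi\mapsto L_\psi$ is linear and a Lov\'asz extension is determined by its values on the vertices $\mathbf{1}_A$ (e.g.\ via (\ref{eq:sdfaf678df})), the identity lifts from $\B^n$ to all of $\R^n$: $(L_{f|_{\B^n}})_0=a\,L_0$ on $\R^n$, i.e.\ $L_0=\tfrac{1}{a}(L_{f|_{\B^n}})_0$.

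The converse is essentially tautological: the very existence of $\varphi_f$ in the hypothesis relies, via Theorem~\ref{thm:CanonicalQuasi-Lovasz}, on the existence of some $A^*$ with $f_0(\mathbf{1}_{A^*})\neq 0$. The only delicate part of the argument is ensuring that the scalar $a$ extracted from the two factorizations is independent of $A$; this is exactly what Lemma~\ref{lemma:sdf7d} provides, since it pins down $f_0(x\mathbf{1}_A)$ as a product separating the dependence on $x$ from that on $A$, so the ratio $\varphi(x)/\varphi_f(x)$ is forced to be a single scalar $a=\varphi(1)$.
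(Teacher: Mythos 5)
Your proof is correct and takes essentially the same route as the paper's: the identity $\varphi(x)L_0(\mathbf{1}_A)=f_0(x\mathbf{1}_A)=\varphi_f(x)(L_{f|_{\B^n}})_0(\mathbf{1}_A)$ you extract from Lemma~\ref{lemma:sdf7d} is exactly the content of Proposition~\ref{lemma:ds8686}, which the paper uses to get $\varphi=a\,\varphi_f$ with $a=\varphi(1)>0$, and your final lifting step is verbatim the paper's appeal to the fact that a Lov\'asz extension is uniquely determined by its values on $\B^n$ (together with linearity of $\psi\mapsto L_\psi$). For the converse the paper is only slightly less tautological than you are---it notes $f_0=L_0\circ\varphi=(L_{f|_{\B^n}})_0\circ\varphi_f$ and invokes Theorem~\ref{thm:CanonicalQuasi-Lovasz}---but your observation that the hypothesis already presupposes the canonical $\varphi_f$, and hence the existence of $A^*$, is consistent with how $\varphi_f$ is defined in the paragraph preceding the theorem.
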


\begin{proof}
(Sufficiency) We have $f_0=L_0\circ\varphi=(L_{f|_{\B^n}})_0\circ\varphi_f$, and by Theorem~\ref{thm:CanonicalQuasi-Lovasz} we see that the
conditions are sufficient.

(Necessity) By Proposition~\ref{lemma:ds8686}, we have
$$
\frac{\varphi(x)}{\varphi(1)}=\frac{f_0(x\mathbf{1}_{A^*})}{f_0(\mathbf{1}_{A^*})}=\varphi_f(x).
$$
We then have $\varphi=a\,\varphi_f$ for some $a>0$. Moreover, for every $\bfx\in\B^n$, we have
\begin{eqnarray*}
(L_{f|_{\B^n}})_0(\bfx) &=& \big((L_{f|_{\B^n}})_0\circ\varphi_f\big)(\bfx)%
~=~ f_0(\bfx)%
~=~ (L_0\circ\varphi)(\bfx)\\
&=& a(L_0\circ\varphi_f)(\bfx)%
~=~ a\,L_0(\bfx).
\end{eqnarray*}
Since a Lov\'asz extension is uniquely determined by its values on $\B^n$, we have $(L_{f|_{\B^n}})_0=a\, L_0$.
\end{proof}

The following two theorems are the negative counterparts of Theorems~\ref{thm:CanonicalQuasi-Lovasz} and \ref{thm:6sad8f65} and their proofs
follow dually.

\begin{theorem}\label{thm:CanonicalQuasi-Lovaszaa}
Assume $[-1,0]\subseteq I\subseteq\R_-$ and let $f\colon I^n\to\R$ be a nonconstant function. Then the following assertions are equivalent.
\begin{enumerate}
\item[$(i)$] $f$ is a quasi-Lov\'asz extension and there exists $A\subseteq [n]$ such that $f_0(-\mathbf{1}_A)\neq 0$.

\item[$(ii)$] $f$ is comonotonically modular (or invariant under horizontal max-differences) and $f_0$ is weakly homogeneous.

\item[$(iii)$] There is a nondecreasing function $\varphi_f\colon I\to\R$ satisfying $\varphi_f(0)=0$ and $\varphi_f(-1)=-1$ such that
$f=L_{f|_{-\B^n}}\circ\varphi_f$.
\end{enumerate}
\end{theorem}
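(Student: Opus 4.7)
The plan is to mirror the proof of Theorem \ref{thm:CanonicalQuasi-Lovasz}, replacing every use of the ``upward'' tool by its ``downward'' counterpart: the representation (\ref{eq:sdfaf678dew}) by (\ref{eq:sdfaf678dew2}), Lemma \ref{lemma:sdf7d} by Lemma \ref{lemma:sdf7daa}, Proposition \ref{lemma:ds8686} by Proposition \ref{lemma:ds8686aa}, and Theorem \ref{thm:horizontaldiff} by Theorem \ref{thm:horizontalmaxdiff}. The three implications go through in the same order.

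For $(i)\Rightarrow (ii)$, given $f=L\circ\varphi$, Proposition \ref{lemma:ds8686aa} immediately yields weak homogeneity of $f_0$. Next I evaluate $L$ at $\varphi(\bfx)$ for $\bfx\in I^n_\sigma$ using (\ref{eq:sdfaf678dew2}) (note that $\varphi$ nondecreasing gives $\varphi(\bfx)\in\R^n_\sigma$), and then rewrite each summand $\varphi(x_{\sigma(i)})\bigl(L_0(-\mathbf{1}_{A_\sigma^\downarrow(i-1)})-L_0(-\mathbf{1}_{A_\sigma^\downarrow(i)})\bigr)$ as $f(x_{\sigma(i)}\mathbf{1}_{A_\sigma^\downarrow(i)})-f(x_{\sigma(i)}\mathbf{1}_{A_\sigma^\downarrow(i-1)})$ via Lemma \ref{lemma:sdf7daa}. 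The resulting identity matches condition $(iii)$ of Theorem \ref{thm:horizontalmaxdiff} with $g=f$, so $f$ is comonotonically modular.

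For $(ii)\Rightarrow (iii)$, Theorem \ref{thm:horizontalmaxdiff} supplies the downward telescoping representation of $f$ on each $I^n_\sigma$. Weak homogeneity of $f_0$ then provides a nondecreasing $\varphi_f$ with $\varphi_f(0)=0$ such that $f_0(x\mathbf{1}_A) = -\varphi_f(x)\,f_0(-\mathbf{1}_A)$; substituting turns the representation into exactly (\ref{eq:sdfaf678dew2}) applied to the (uniquely determined) Lov\'asz extension $L_{f|_{-\B^n}}$ whose values on $-\B^n$ coincide with those of $f$. Hence $f=L_{f|_{-\B^n}}\circ\varphi_f$. To obtain $\varphi_f(-1)=-1$, I pick $A^*\subseteq[n]$ with $f_0(-\mathbf{1}_{A^*})\neq 0$ (such $A^*$ exists, otherwise $f$ would be constant by the just-derived representation) and plug $x=-1$ into $f_0(x\mathbf{1}_{A^*})=-\varphi_f(x)\,f_0(-\mathbf{1}_{A^*})$. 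The implication $(iii)\Rightarrow(i)$ is then immediate from Proposition \ref{lemma:ds8686aa} applied with the equivalence $(iii)\Leftrightarrow(ii)$ there.

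The main obstacle is the bookkeeping of signs: unlike the positive case, Lemma \ref{lemma:sdf7daa} introduces a minus sign, and one must verify that it cancels correctly in order to recognize the derived representation as an instance of (\ref{eq:sdfaf678dew2}). A secondary subtle point is to justify that a Lov\'asz extension is uniquely determined by its values $L(-\mathbf{1}_A)$ for $A\subseteq[n]$; this is precisely what (\ref{eq:sdfaf678dew2}) asserts, since it expresses the extension entirely in terms of those values, so the notation $L_{f|_{-\B^n}}$ is unambiguous.
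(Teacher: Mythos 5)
Your proof is correct and is exactly what the paper intends: the paper gives no separate argument for Theorem~\ref{thm:CanonicalQuasi-Lovaszaa}, stating only that it ``follows dually'' from Theorem~\ref{thm:CanonicalQuasi-Lovasz}, and your substitutions --- (\ref{eq:sdfaf678dew2}) for (\ref{eq:sdfaf678dew}), Lemma~\ref{lemma:sdf7daa} for Lemma~\ref{lemma:sdf7d}, Proposition~\ref{lemma:ds8686aa} for Proposition~\ref{lemma:ds8686}, and Theorem~\ref{thm:horizontalmaxdiff} for Theorem~\ref{thm:horizontaldiff} --- are precisely the intended dualization, with the minus sign from Lemma~\ref{lemma:sdf7daa} correctly absorbed by reversing the differences in the downward telescoping representation. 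Your two flagged subtleties (the sign bookkeeping and the well-definedness of $L_{f|_{-\B^n}}$ via (\ref{eq:sdfaf678dew2}) together with the identity $L_{\psi}(-\mathbf{1}_{A_{\sigma}^{\downarrow}(k-1)})=\psi(\mathbf{0})+L_{\psi}(\mathbf{1}_{A_{\sigma}^{\uparrow}(k)})-L_{\psi}(\mathbf{1}_{A_{\sigma}^{\uparrow}(1)})$, which makes the correspondence between values on $\B^n$ and on $-\B^n$ explicitly invertible) are handled adequately.
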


\begin{theorem}\label{thm:dsaf5s}
Assume $[-1,0]\subseteq I\subseteq\R_-$ and let $f\colon I^n\to\R$ be a quasi-Lov\'asz extension, $f=L\circ\varphi$. Then there exists
$A^*\subseteq [n]$ such that $f_0(-\mathbf{1}_{A^*})\neq 0$ if and only if there exists $a>0$ such that $\varphi=a\,\varphi_f$ and $L_0=\frac
1a(L_{f|_{-\B^n}})_0$.
\end{theorem}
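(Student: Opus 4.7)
My plan is to mirror the proof of Theorem~\ref{thm:6sad8f65}, substituting each positive-side tool by its negative-side analogue: Proposition~\ref{lemma:ds8686} becomes Proposition~\ref{lemma:ds8686aa}, Theorem~\ref{thm:CanonicalQuasi-Lovasz} becomes Theorem~\ref{thm:CanonicalQuasi-Lovaszaa}, and the normalization $\varphi_f(1)=1$ becomes $\varphi_f(-1)=-1$.

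For sufficiency, assume $\varphi=a\,\varphi_f$ and $L_0=\frac{1}{a}(L_{f|_{-\B^n}})_0$ for some $a>0$. Since $L_0$ is positively homogeneous of degree one and $a>0$, I compute
$$
f_0 = L_0\circ\varphi = L_0\circ(a\,\varphi_f) = a\,(L_0\circ\varphi_f) = (L_{f|_{-\B^n}})_0\circ\varphi_f,
$$
which is the canonical factorization provided by Theorem~\ref{thm:CanonicalQuasi-Lovaszaa}. Its condition $(ii)$ then supplies the required $A^*\subseteq[n]$.

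For necessity, the existence of $A^*$ with $f_0(-\mathbf{1}_{A^*})\neq 0$ brings both Theorem~\ref{thm:CanonicalQuasi-Lovaszaa} and Proposition~\ref{lemma:ds8686aa} into play. Applying the latter once to $f=L\circ\varphi$ and once to the canonical form $f=L_{f|_{-\B^n}}\circ\varphi_f$ (and using $\varphi_f(-1)=-1$) yields
$$
\frac{\varphi(x)}{\varphi(-1)} = \frac{f_0(x\mathbf{1}_{A^*})}{f_0(-\mathbf{1}_{A^*})} = -\varphi_f(x).
$$
Setting $a=-\varphi(-1)$ delivers $\varphi=a\,\varphi_f$, with $a>0$ because monotonicity of $\varphi$ and $\varphi(0)=0$ force $\varphi(-1)\leqslant 0$, while Proposition~\ref{lemma:ds8686aa}$(iii)$ rules out $\varphi(-1)=0$. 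For the identity on $L_0$, I evaluate on $-\B^n$: since the coordinate map $\varphi_f$ fixes $-\B^n$ pointwise, I obtain
$$
(L_{f|_{-\B^n}})_0(\bfx) = f_0(\bfx) = L_0(\varphi(\bfx)) = L_0(a\,\bfx) = a\,L_0(\bfx),\qquad \bfx\in -\B^n.
$$

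The main point demanding care is the propagation of this last equality from $-\B^n$ to all of $\R^n$: in the positive case this is immediate from the defining property of a Lov\'asz extension, but here I need the dual uniqueness assertion that a Lov\'asz extension is determined by its values on $-\B^n$. This follows from representation~(\ref{eq:sdfaf678dew2}), which expresses any Lov\'asz extension on each region $\R^n_{\sigma}$ as an affine function of $\bfx$ whose coefficients are built from the values at the vertices $-\mathbf{1}_{A_{\sigma}^{\downarrow}(i)}$ of the simplex $[-1,0]^n\cap\R^n_{\sigma}$.
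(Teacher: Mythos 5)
Your proof is correct and takes exactly the route the paper intends: the paper's own ``proof'' of Theorem~\ref{thm:dsaf5s} is simply the remark that it follows dually from Theorem~\ref{thm:6sad8f65}, and your argument is precisely that dualization --- Proposition~\ref{lemma:ds8686aa} giving $\varphi=a\,\varphi_f$ with $a=-\varphi(-1)>0$, evaluation on $-\B^n$ using $\varphi_f(0)=0$, $\varphi_f(-1)=-1$ and positive homogeneity of $L_0$, and propagation from $-\B^n$ to $\R^n$ via the representation~(\ref{eq:sdfaf678dew2}), which is indeed the one step where the dual uniqueness claim needs to be (and is correctly) justified. The only slip is a label: in the sufficiency direction the existence of $A^*$ comes from condition $(i)$ of Theorem~\ref{thm:CanonicalQuasi-Lovaszaa} (equivalently, condition $(ii)$ of Proposition~\ref{lemma:ds8686aa}), not from condition $(ii)$ of that theorem.
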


\begin{remark}
If $I=[0,1]$ (resp.\ $I=[-1,0]$), then the ``nonconstant'' assumption and the second condition in assertion $(i)$ of
Theorem~\ref{thm:CanonicalQuasi-Lovasz} (resp.\ Theorem~\ref{thm:CanonicalQuasi-Lovaszaa}) can be dropped off.
\end{remark}

\section{Axiomatization and representation of symmetric quasi-Lov\'asz extensions}

We now provide an axiomatization of the class of symmetric quasi-Lov\'asz extensions and describe all possible factorizations of symmetric
quasi-Lov\'asz extensions into compositions of symmetric Lov\'asz extensions with $1$-place nondecreasing odd functions. We proceed in complete
analogy as in the previous section.

\begin{theorem}\label{thm:CanonicalSymQuasi-Lovasz}
Assume that $I$ is centered at $0$ with $[-1,1]\subseteq I$ and let $f\colon I^n\to\R$ be a function such that $f|_{I_+^n}$ or $f|_{I_-^n}$ is
nonconstant. Then the following assertions are equivalent.
\begin{enumerate}
\item[$(i)$] $f$ is a symmetric quasi-Lov\'asz extension and there exists $A\subseteq [n]$ such that $f_0(\mathbf{1}_A)\neq 0$.

\item[$(ii)$] $f$ is comonotonically modular and $f_0$ is oddly homogeneous.

\item[$(iii)$] There is a nondecreasing odd function $\varphi_f\colon I\to\R$ satisfying $\varphi_f(1)=1$ such that
$f=\check{L}_{f|_{\B^n}}\circ\varphi_f$.
\end{enumerate}
\end{theorem}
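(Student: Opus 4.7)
The plan is to mirror the proof of Theorem~\ref{thm:CanonicalQuasi-Lovasz}, replacing its positive-only machinery by the symmetric tools already established in the paper: Lemma~\ref{lemma:asd68} and Proposition~\ref{lemma:ds8686sym} (which handle odd homogeneity), the split representation (\ref{eq:sdfsfd65dsf}) of $\check{L}_{\psi}$ on each $\R^n_\sigma$, and Theorem~\ref{thm:wqer87we}, which characterizes comonotonic modularity over an arbitrary interval containing $0$. The oddness hypothesis on $I$ plus the hypothesis that $f|_{I_+^n}$ or $f|_{I_-^n}$ is nonconstant ensure that the positive and negative halves of Propositions~\ref{lemma:ds8686} and \ref{lemma:ds8686aa} can be invoked simultaneously.

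For $(i)\Rightarrow (ii)$, we write $f=\check{L}\circ\varphi$ and apply Proposition~\ref{lemma:ds8686sym} directly to conclude that $f_0$ is oddly homogeneous (the hypothesis of a nonconstant restriction matches exactly the hypothesis of that proposition). For comonotonic modularity, on each $\bfx\in I^n_\sigma$ with split index $p$ (so that $x_{\sigma(p)}<0\leqslant x_{\sigma(p+1)}$), I would substitute Lemma~\ref{lemma:asd68} into formula (\ref{eq:sdfsfd65dsf}) to obtain
\begin{eqnarray*}
f(\bfx) &=& f(\mathbf{0})+\sum_{1\leqslant i\leqslant p}\bigl(f(x_{\sigma(i)}\mathbf{1}_{A_{\sigma}^{\downarrow}(i)})-f(x_{\sigma(i)}\mathbf{1}_{A_{\sigma}^{\downarrow}(i-1)})\bigr)\\
&& \null +\sum_{p+1\leqslant i\leqslant n}\bigl(f(x_{\sigma(i)}\mathbf{1}_{A_{\sigma}^{\uparrow}(i)})-f(x_{\sigma(i)}\mathbf{1}_{A_{\sigma}^{\uparrow}(i+1)})\bigr),
\end{eqnarray*}
which is precisely assertion $(iii)$ of Theorem~\ref{thm:wqer87we} with $g=f|_{I_+^n}$ and $h=f|_{I_-^n}$, hence $f$ is comonotonically modular.

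For $(ii)\Rightarrow (iii)$, I would restrict to $I_+^n$ and $I_-^n$ separately. Since comonotonic modularity is inherited by these restrictions, and odd homogeneity of $f_0$ forces $f_0|_{I_+^n}$ and $f_0|_{I_-^n}$ to be weakly homogeneous with the same underlying function (up to sign), Theorems~\ref{thm:CanonicalQuasi-Lovasz} and \ref{thm:CanonicalQuasi-Lovaszaa} produce nondecreasing functions $\varphi_f^+\colon I_+\to\R$ and $\varphi_f^-\colon I_-\to\R$ that, by odd homogeneity, glue into a single nondecreasing odd $\varphi_f$ on $I$ with $\varphi_f(1)=1$ and $\varphi_f(-1)=-1$. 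Using (\ref{eq:sdf75dd}) together with the definition $\check{L}_{\psi}(\bfy)=\psi(\mathbf{0})+L_{\psi}(\bfy^+)-L_{\psi}(\bfy^-)$, and the identities $\varphi_f(\bfx)^+=\varphi_f(\bfx^+)$ and $\varphi_f(\bfx)^-=\varphi_f(\bfx^-)$ (which follow from $\varphi_f$ being odd and nondecreasing with $\varphi_f(0)=0$), the two Lov\'asz factorizations combine into $f=\check{L}_{f|_{\B^n}}\circ\varphi_f$.

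Finally, $(iii)\Rightarrow (i)$ is immediate from the definition of a symmetric quasi-Lov\'asz extension, and $\varphi_f(1)=1$ together with Proposition~\ref{lemma:ds8686sym} (or direct computation via $f_0(\mathbf{1}_A)=\varphi_f(1)\check{L}_0(\mathbf{1}_A)$ from Lemma~\ref{lemma:asd68}) produces some $A\subseteq [n]$ with $f_0(\mathbf{1}_A)\neq 0$, as needed. The main obstacle I expect is precisely the gluing in $(ii)\Rightarrow (iii)$: verifying that the separate factorizations supplied by Theorems~\ref{thm:CanonicalQuasi-Lovasz} and \ref{thm:CanonicalQuasi-Lovaszaa} use the \emph{same} pseudo-Boolean function $f|_{\B^n}$ (in symmetric sense) and produce a genuinely odd $\varphi_f$, rather than two unrelated positive/negative factorizations.
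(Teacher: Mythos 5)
Your proposal is correct, and two of its three implications coincide with the paper's: $(i)\Rightarrow(ii)$ via Proposition~\ref{lemma:ds8686sym} plus the substitution of Lemma~\ref{lemma:asd68} into (\ref{eq:sdfsfd65dsf}) and an appeal to Theorem~\ref{thm:wqer87we}, and $(iii)\Rightarrow(i)$ via Proposition~\ref{lemma:ds8686sym}, are exactly the paper's steps. Where you genuinely diverge is the central implication $(ii)\Rightarrow(iii)$: the paper never splits the domain or invokes the one-sided Theorems~\ref{thm:CanonicalQuasi-Lovasz} and \ref{thm:CanonicalQuasi-Lovaszaa}. Instead it applies Theorem~\ref{thm:wqer87we}$(iii)$ with $g$ and $h$ the restrictions of $f$, and then substitutes odd homogeneity, $f_0(x\mathbf{1}_B)=\varphi_f(x)f_0(\mathbf{1}_B)$ for \emph{all} $x\in I$ (negative $x$ included), into \emph{both} sums at once; since odd homogeneity expresses the negative-index terms through the positive indicators $f(\mathbf{1}_{A_{\sigma}^{\downarrow}(i)})$, the resulting formula is literally (\ref{eq:sdfsfd65dsf}) for $\psi=f|_{\B^n}$ composed with $\varphi_f$, and $\varphi_f(1)=1$ then drops out by evaluating at $\mathbf{1}_A$ (nonconstancy ruling out $f_0\equiv 0$). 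This single substitution is precisely what dissolves the obstacle you flag: no compatibility between separate positive and negative factorizations ever needs checking. Your gluing route does go through, but it obliges you to verify three things the paper gets for free: (a) that under $(ii)$ \emph{both} $f|_{I_+^n}$ and $f|_{I_-^n}$ are nonconstant with some $f_0(\mathbf{1}_A)\neq 0$ and $f_0(-\mathbf{1}_A)\neq 0$ — needed as hypotheses of Theorems~\ref{thm:CanonicalQuasi-Lovasz} and \ref{thm:CanonicalQuasi-Lovaszaa}, and true because otherwise the representation of Theorem~\ref{thm:wqer87we} combined with odd homogeneity forces $f_0\equiv 0$, after which $\varphi_f(-1)=-\varphi_f(1)=-1$ gives $f_0(-\mathbf{1}_A)=-f_0(\mathbf{1}_A)\neq 0$; (b) that the two unique inner functions glue to a single odd $\varphi_f$ — fine, since both are $x\mapsto f_0(x\mathbf{1}_{A^*})/f_0(\mathbf{1}_{A^*})$ once (a) is in place; and (c) the pasting identity $(L_{f|_{-\B^n}})_0(-\mathbf{u})=-(L_{f|_{\B^n}})_0(\mathbf{u})$ for $\mathbf{u}\in\R_+^n$, which you need (together with (\ref{eq:sdf75dd}) and your identities $\varphi_f(\bfx)^{+}=\varphi_f(\bfx^{+})$, $\varphi_f(\bfx)^{-}=\varphi_f(\bfx^{-})$) to assemble $f=\check{L}_{f|_{\B^n}}\circ\varphi_f$; this identity holds by comparing (\ref{eq:sdfaf678dew}) and (\ref{eq:sdfaf678dew2}), because by (a) the two Lov\'asz extensions take opposite $f_0$-values at $\mathbf{1}_A$ and $-\mathbf{1}_A$. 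In short: your argument is sound and you correctly located the real work; the trade-off is that your route reuses the one-sided axiomatization theorems as black boxes at the price of a nontrivial gluing verification, whereas the paper's direct substitution argument bypasses the gluing entirely.
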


\begin{proof}
Let us prove that $(i)\Rightarrow (ii)$. By definition, we have $f=\check{L}\circ\varphi$, where $\check{L}\colon\R^n\to\R$ is a symmetric
Lov\'asz extension and $\varphi\colon I\to\R$ is a nondecreasing odd function. By Proposition~\ref{lemma:ds8686sym}, $f_0$ is oddly homogeneous.
Moreover, for every $\sigma\in S_n$ and every $\bfx\in I^n_{\sigma}$, by (\ref{eq:sdfsfd65dsf}) and (\ref{eq:dsa786aabv}) we have
\begin{eqnarray*}
f(\bfx) &=& f(\mathbf{0})+\sum_{1\leqslant i\leqslant p}
\varphi(x_{\sigma(i)})\,\big(L_0(\mathbf{1}_{A_{\sigma}^{\downarrow}(i)})-L_0(\mathbf{1}_{A_{\sigma}^{\downarrow}(i-1)})\big)\\
&& \null +\sum_{p+1\leqslant i\leqslant n}
\varphi(x_{\sigma(i)})\,\big(L_0(\mathbf{1}_{A_{\sigma}^{\uparrow}(i)})-L_0(\mathbf{1}_{A_{\sigma}^{\uparrow}(i+1)})\big)\\
&=& f(\mathbf{0})+\sum_{1\leqslant i\leqslant p} \big
(f(x_{\sigma(i)}\mathbf{1}_{A_{\sigma}^{\downarrow}(i)})-f(x_{\sigma(i)}\mathbf{1}_{A_{\sigma}^{\downarrow}(i-1)})\big)\\
&& \null +\sum_{p+1\leqslant i\leqslant n} \big
(f(x_{\sigma(i)}\mathbf{1}_{A_{\sigma}^{\uparrow}(i)})-f(x_{\sigma(i)}\mathbf{1}_{A_{\sigma}^{\uparrow}(i+1)})\big),
\end{eqnarray*}
where $p\in\{0,\ldots,n\}$ is such that $x_{\sigma(p)}<0\leqslant x_{\sigma(p+1)}$. By Theorem~\ref{thm:wqer87we} it then follows that $f$ is
comonotonically modular.

Let us prove that $(ii)\Rightarrow (iii)$. Since $f$ is comonotonically modular and $f_0$ is oddly homogeneous, by Theorem~\ref{thm:wqer87we} we
have that, for every $\sigma\in S_n$ and every $\bfx\in I^n_{\sigma}$,
\begin{eqnarray}
f(\bfx) &=& f(\mathbf{0})+\sum_{1\leqslant i\leqslant p} \big
(f(x_{\sigma(i)}\mathbf{1}_{A_{\sigma}^{\downarrow}(i)})-f(x_{\sigma(i)}\mathbf{1}_{A_{\sigma}^{\downarrow}(i-1)})\big)\nonumber\\
&& \null +\sum_{p+1\leqslant i\leqslant n} \big
(f(x_{\sigma(i)}\mathbf{1}_{A_{\sigma}^{\uparrow}(i)})-f(x_{\sigma(i)}\mathbf{1}_{A_{\sigma}^{\uparrow}(i+1)})\big)\nonumber\\
&=& f(\mathbf{0})+\sum_{1\leqslant i\leqslant p}
\varphi_f(x_{\sigma(i)})\,\big(f(\mathbf{1}_{A_{\sigma}^{\downarrow}(i)})-f(\mathbf{1}_{A_{\sigma}^{\downarrow}(i-1)})\big)\nonumber\\
&& \null +\sum_{p+1\leqslant i\leqslant n}
\varphi_f(x_{\sigma(i)})\,\big(f(\mathbf{1}_{A_{\sigma}^{\uparrow}(i)})-f(\mathbf{1}_{A_{\sigma}^{\uparrow}(i+1)})\big)\label{eq:sa9df78aa}
\end{eqnarray}
for some nondecreasing odd function $\varphi_f\colon I\to\R$, where $p\in\{0,\ldots,n\}$ is such that $x_{\sigma(p)}<0\leqslant
x_{\sigma(p+1)}$. By (\ref{eq:sdfsfd65dsf}), we then obtain $f=\check{L}_{f|_{\B^n}}\circ\varphi_f$. Finally, by (\ref{eq:sa9df78aa}) we then
have that, for every $A\subseteq [n]$,
$$
f_0(\mathbf{1}_A)=\varphi_f(1)f_0(\mathbf{1}_A).
$$
Since there exists $A\subseteq [n]$ such that $f_0(\mathbf{1}_A)\neq 0$ (for otherwise we would have $f_0\equiv 0$ by (\ref{eq:sa9df78aa})), we
obtain $\varphi_f(1)=1$.

The implication $(iii)\Rightarrow (i)$ follows from Proposition~\ref{lemma:ds8686sym}.
\end{proof}

Assume again that $I$ is centered at $0$ with $[-1,1]\subseteq I$ and let $f\colon I^n\to\R$ be a symmetric quasi-Lov\'asz extension for which
there exists $A^*\subseteq [n]$ such that $f_0(\mathbf{1}_{A^*})\neq 0$. Then the inner function $\varphi_f$ introduced in
Theorem~\ref{thm:CanonicalSymQuasi-Lovasz} is unique. Indeed, by Proposition~\ref{lemma:ds8686sym}, we have
$f_0(x\mathbf{1}_A)=\varphi_f(x)f_0(\mathbf{1}_A)$ for every $x\in I$ and every $A\subseteq [n]$. The function $\varphi_f$ is then defined by
$$
\varphi_f(x)=\frac{f_0(x\mathbf{1}_{A^*})}{f_0(\mathbf{1}_{A^*})}~,\qquad x\in I.
$$

We can now describe the possible factorizations of $f$ into compositions of symmetric Lov\'asz extensions with nondecreasing odd functions. The
proof is similar to that of Theorem~\ref{thm:6sad8f65} and thus it is omitted.

\begin{theorem}\label{thm:6sa2d845f65}
Assume that $I$ is centered at $0$ with $[-1,1]\subseteq I$ and let $f\colon\ I^n\to\R$ be a symmetric quasi-Lov\'asz extension,
$f=\check{L}\circ\varphi$. Then there exists $A^*\subseteq [n]$ such that $f_0(\mathbf{1}_{A^*})\neq 0$ if and only if there exists $a>0$ such
that $\varphi=a\,\varphi_f$ and $\check{L}_0=\frac 1a(\check{L}_{f|_{\B^n}})_0$.
\end{theorem}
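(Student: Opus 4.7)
The plan is to mirror the proof of Theorem~\ref{thm:6sad8f65} in the symmetric setting, substituting the symmetric analogues throughout: $\check{L}$ in place of $L$, Proposition~\ref{lemma:ds8686sym} in place of Proposition~\ref{lemma:ds8686}, and Theorem~\ref{thm:CanonicalSymQuasi-Lovasz} in place of Theorem~\ref{thm:CanonicalQuasi-Lovasz}. Two enabling facts will be repeatedly used: (a) $\check{L}_0$ is $\R$-homogeneous, which is clear from $\check{L}_0(\bfx)=L_0(\bfx^+)-L_0(\bfx^-)$ together with the positive homogeneity of $L_0$ and the oddness of $\check{L}_0$; and (b) a symmetric Lov\'asz extension is uniquely determined by its restriction to $\B^n$, which is immediate from the defining identity $\check{L}_\psi(\bfx)=\psi(\mathbf{0})+L_\psi(\bfx^+)-L_\psi(\bfx^-)$ and the corresponding uniqueness for ordinary Lov\'asz extensions.

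For the sufficiency, I assume the two relations $\varphi=a\,\varphi_f$ and $\check{L}_0=\frac{1}{a}(\check{L}_{f|_{\B^n}})_0$ and compute
$$
f_0 \;=\; \check{L}_0\circ\varphi \;=\; \tfrac{1}{a}(\check{L}_{f|_{\B^n}})_0\circ(a\,\varphi_f) \;=\; (\check{L}_{f|_{\B^n}})_0\circ\varphi_f,
$$
the last equality by $\R$-homogeneity of $(\check{L}_{f|_{\B^n}})_0$. Since $\varphi_f$ is only defined when some $A^*\subseteq[n]$ with $f_0(\mathbf{1}_{A^*})\neq 0$ exists (as spelled out in the paragraph preceding the theorem), the desired $A^*$ is automatically furnished.

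For the necessity, suppose $f_0(\mathbf{1}_{A^*})\neq 0$ for some $A^*\subseteq[n]$. Then $f|_{I_+^n}$ is nonconstant, so Proposition~\ref{lemma:ds8686sym} applies and yields $\varphi(1)\neq 0$ together with the identity $f_0(x\mathbf{1}_A)=\frac{\varphi(x)}{\varphi(1)}\,f_0(\mathbf{1}_A)$ for every $x\in I$ and $A\subseteq[n]$. Specializing to $A=A^*$ and comparing with the defining formula $\varphi_f(x)=f_0(x\mathbf{1}_{A^*})/f_0(\mathbf{1}_{A^*})$ gives $\varphi=a\,\varphi_f$ with $a=\varphi(1)$; the monotonicity of $\varphi$ together with $\varphi(0)=0$ then force $a>0$. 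For the second relation, I evaluate both candidate extensions on $\B^n$: since $\varphi(\bfx)=a\bfx$ for $\bfx\in\B^n$, positive homogeneity of $\check{L}_0$ gives $f_0(\bfx)=\check{L}_0(a\bfx)=a\,\check{L}_0(\bfx)$, while $\check{L}_\psi|_{\B^n}=\psi$ (because $\bfx^-=\mathbf{0}$ when $\bfx\in\B^n$) gives $(\check{L}_{f|_{\B^n}})_0(\bfx)=f_0(\bfx)$. Thus the two symmetric Lov\'asz extensions $a\,\check{L}_0$ and $(\check{L}_{f|_{\B^n}})_0$ agree on $\B^n$, and by fact (b) they coincide on all of $\R^n$, yielding $\check{L}_0=\frac{1}{a}(\check{L}_{f|_{\B^n}})_0$.

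The main point requiring care is the closing step: that $a\,\check{L}_0$ is itself a symmetric Lov\'asz extension, so that the uniqueness principle in fact (b) applies. This follows from the scaling identity $\check{L}_{a\psi}=a\,\check{L}_\psi$, which is in turn immediate from $L_{a\psi}=a\,L_\psi$ combined with the definition of $\check{L}$. Once this is in hand, the argument is a mechanical translation of the one given for Theorem~\ref{thm:6sad8f65}.
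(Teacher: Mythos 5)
Your proof is correct and takes essentially the same route as the paper: the paper omits the proof of Theorem~\ref{thm:6sa2d845f65}, noting only that it is similar to that of Theorem~\ref{thm:6sad8f65}, and your argument is exactly that adaptation (Proposition~\ref{lemma:ds8686sym} yielding $\varphi=a\,\varphi_f$ with $a=\varphi(1)>0$, then agreement of $a\,\check{L}_0$ and $(\check{L}_{f|_{\B^n}})_0$ on $\B^n$ plus uniqueness). The auxiliary facts you supply --- homogeneity of $\check{L}_0$, $\check{L}_\psi|_{\B^n}=\psi$, and the scaling identity $\check{L}_{a\psi}=a\,\check{L}_\psi$ needed to invoke uniqueness of symmetric Lov\'asz extensions from their values on $\B^n$ --- are precisely what the paper's ``the proof is similar and thus omitted'' leaves implicit, and they hold as you state.
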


\begin{remark}
If $I=[-1,1]$, then the ``nonconstant'' assumption and the second condition in assertion $(i)$ of Theorem~\ref{thm:CanonicalSymQuasi-Lovasz} can
be dropped off.
\end{remark}

\section{Application: Quasi-polynomial functions on chains}

In this section we show that prominent classes of lattice functions on closed real intervals are comonotonically modular. To this extent we need to introduce some basic concepts and terminology.

Let $L$ be a bounded distributive lattice. Recall that a \emph{lattice polynomial function on $L$} is a mapping $p\colon L^n\to L$ which can be expressed as combinations of variables and constants using the lattice operations $\wedge$ and $\vee$.
As it is well known, the notion of lattice polynomial function generalizes that of the discrete Sugeno integral. For further background on lattice polynomial functions and discrete Sugeno integrals see, e.g., \cite{CouMar1,CouMar2,CouMar0,Goo67}; see also \cite{BurSan81,Grae03,Rud01} for general background on lattice theory.

In \cite{CouMar09} the authors introduced the notion of ``quasi-polynomial function'' as being a mapping $f\colon X^n\to X$ defined and valued on a bounded chain $X$ and which can be factorized into a composition of a lattice polynomial function with a nondecreasing function.

In the current paper we restrict ourselves to such mappings on closed intervals $J\subseteq\overline{\R}=\left[-\infty,+\infty\right]$. More precisely, by a \emph{quasi-polynomial function on $J$} we mean a mapping $f\colon J^n\to\R$ which can be factorized as
$$
f=p\circ\varphi,
$$
where $\varphi\colon J\to\overline{\R}$ is an order-preserving map and  $p\colon\overline{\R}^n\to\overline{\R}$ is a lattice polynomial function on $\overline{\R}$. For further extensions and generalizations, see \cite{CouMar10,CW1,CW2}.

The class of quasi-polynomial functions was axiomatized in \cite{CouMar09} in terms of two well-known conditions in aggregation theory, which we now briefly describe.

A function $f\colon J^n\rightarrow\overline{\R}$ is said to be \emph{comonotonically maxitive} if, for any two comonotonic tuples $\bfx,\bfx'\in
J^n$,
$$
f(\bfx\vee\bfx') = f(\bfx)\vee f(\bfx').
$$
Dually, $f\colon J^{n}\rightarrow\overline{\R}$ is said to be \emph{comonotonically minitive} if, for any two comonotonic tuples $\bfx,\bfx'\in
J^n$,
$$
f(\bfx\wedge\bfx') = f(\bfx)\wedge f(\bfx').
$$

\begin{theorem}[{\cite{CouMar09,CouMar10}}]\label{thm:CoMinMax}
A function $f\colon J^n\rightarrow\overline{\R}$ is a quasi-polynomial function if and only if it is is comonotonically maxitive and comonotonically minitive.
\end{theorem}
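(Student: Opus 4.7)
The plan is to prove the two implications separately. For the sufficiency, that every quasi-polynomial function $f=p\circ\varphi$ is comonotonically maxitive and minitive, I would first observe that applying an order-preserving map $\varphi\colon J\to\overline{\R}$ componentwise commutes with both $\vee$ and $\wedge$ on tuples and preserves comonotonicity (since the relative ordering of the coordinates is unchanged). It therefore suffices to show that every lattice polynomial function $p\colon\overline{\R}^n\to\overline{\R}$ is comonotonically maxitive and minitive. Writing $p$ in disjunctive normal form $p(\bfx)=\bigvee_{A\subseteq [n]}\bigl(c_A\wedge\bigwedge_{i\in A}x_i\bigr)$, I would verify the property term by term. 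The key observation is that for $\bfx,\bfx'\in\overline{\R}^n_\sigma$, the index $k_A=\min\{j : \sigma(j)\in A\}$ simultaneously realizes $\bigwedge_{i\in A}x_i$ and $\bigwedge_{i\in A}x'_i$, so $\bigwedge_{i\in A}(x_i\vee x'_i)=\bigwedge_{i\in A}x_i\vee\bigwedge_{i\in A}x'_i$; distributivity of $\overline{\R}$ then handles the leading $c_A\wedge\cdot$ and the outer $\vee$. Comonotonic minitivity follows dually from the conjunctive normal form of $p$.

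For the necessity, I would construct the factorization explicitly. Set $\varphi(x)=f(x\mathbf{1})$, the restriction of $f$ to the diagonal. Since diagonal tuples are pairwise comonotonic, comonotonic maxitivity and minitivity immediately yield $\varphi(x\vee y)=\varphi(x)\vee\varphi(y)$ and $\varphi(x\wedge y)=\varphi(x)\wedge\varphi(y)$, so $\varphi$ is order-preserving. The candidate polynomial $p$ would be read off the values of $f$ on the ``skeleton'' $\{x\mathbf{1}_A : x\in J,\, A\subseteq [n]\}$, which will determine the coefficients of the DNF. Using the comonotonic decomposition $\bfx=\bigvee_{i=1}^n x_{\sigma(i)}\mathbf{1}_{A_\sigma^\uparrow(i)}$ valid on $J^n_\sigma$ (and its dual conjunctive counterpart), I would iterate comonotonic maxitivity to unfold $f(\bfx)$ as a join of terms $f(x_{\sigma(i)}\mathbf{1}_{A_\sigma^\uparrow(i)})$, and then use comonotonic minitivity to separate the dependence on the scalar $x_{\sigma(i)}$ from that on the support $A_\sigma^\uparrow(i)$, producing coefficients $c_A$ intrinsic to $f$ and a dependence only through $\varphi(x_{\sigma(i)})$. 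Assembling the terms yields a single lattice polynomial $p$ with $f=p\circ\varphi$.

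The principal obstacle lies in the necessity direction, specifically in verifying that the polynomial $p$ extracted from one comonotonic cell $J^n_\sigma$ agrees with the one extracted from every other cell $J^n_\tau$. Adjacent cells share boundary tuples with repeated coordinates, and the factorization must be consistent there. Resolving this requires exploiting comonotonic maxitivity and minitivity jointly: the former constrains the $\vee$-structure and the latter the $\wedge$-structure of $p$, and their interplay over the distributive chain $\overline{\R}$ forces the coefficients $c_A$ to be invariants of $f$ (computable from $f(x\mathbf{1}_A)$ via $\varphi$, independently of $\sigma$). A secondary technical subtlety is the possibility that $J$ is a proper subinterval of $\overline{\R}$ not containing its endpoints, in which case the ``top'' and ``bottom'' values needed to pin down the coefficients must be replaced by representative values taken within $\varphi(J)$ or obtained via a limiting argument.
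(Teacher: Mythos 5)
Note first that the paper itself contains no proof of this statement: Theorem~\ref{thm:CoMinMax} is imported verbatim from \cite{CouMar09,CouMar10}, so the only comparison possible is with the arguments of those references, which your proposal essentially reconstructs. Your sufficiency direction is correct as written: on a chain, an order-preserving $\varphi$ commutes with $\wedge$ and $\vee$ and preserves comonotonicity, and your term-by-term verification for the disjunctive normal form, with the index $k_A=\min\{j:\sigma(j)\in A\}$ simultaneously realizing the three minima, is exactly the right mechanism.

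In the necessity direction your plan is sound, but two of the difficulties you flag are illusory, while the one step you leave vague is where the real content lies. First, the cross-cell consistency problem you call the ``principal obstacle'' disappears entirely if you never define $p$ per cell: with $\varphi(x)=f(x\mathbf{1})$ and coefficients defined intrinsically, you derive inside each cell $J^n_\sigma$ that $f$ coincides with the restriction of one globally defined polynomial composed with $\varphi$, so there is nothing to glue. Second, your endpoint worry is moot: $J$ is a \emph{closed} subinterval of $\overline{\R}$, hence contains $a=\min J$ and $b=\max J$, and no limiting argument is needed; beware, though, that your skeleton $x\mathbf{1}_A$ tacitly assumes $0\in J$ is the bottom element, which fails for general $J$ (e.g.\ $J=[1,2]$) --- the off-$A$ coordinates must be padded with $a$, not $0$. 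Third, the ``separation'' of the scalar from the support via minitivity, which you only gesture at, has the following concrete realization: for $\bfx\in J^n_\sigma$ write $\bfx=\bigvee_{i=1}^n \mathbf{u}_i$, where $\mathbf{u}_i$ equals $x_{\sigma(i)}$ on $A_\sigma^\uparrow(i)$ and $a$ elsewhere; the partial joins stay in $J^n_\sigma$, so iterated comonotonic maxitivity gives $f(\bfx)=\bigvee_i f(\mathbf{u}_i)$. Then $\mathbf{u}_i=\mathbf{e}_{A_\sigma^\uparrow(i)}\wedge x_{\sigma(i)}\mathbf{1}$, where $\mathbf{e}_A$ is $b$ on $A$ and $a$ elsewhere, and since constant tuples are comonotonic with \emph{every} tuple, minitivity yields $f(\mathbf{u}_i)=c_{A_\sigma^\uparrow(i)}\wedge\varphi(x_{\sigma(i)})$ with $c_A:=f(\mathbf{e}_A)$. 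Finally, to identify $\bigvee_i\bigl(c_{A_\sigma^\uparrow(i)}\wedge\varphi(x_{\sigma(i)})\bigr)$ with $p\circ\varphi$ for $p(\mathbf{y})=\bigvee_{A}\bigl(c_A\wedge\bigwedge_{i\in A}y_i\bigr)$, you need $A\mapsto c_A$ to be nondecreasing, which follows by applying maxitivity to the comonotonic pair $\mathbf{e}_A\leqslant\mathbf{e}_B$ for $A\subseteq B$. With these repairs your outline becomes a complete and correct proof, in the same spirit as the cited one.
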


Immediately from Theorem \ref{thm:CoMinMax} it follows that every quasi-polynomial function $f\colon J^n\rightarrow \R$ is comonotonically
modular. Indeed, by comonotonic maxitivity and comonotonic minitivity, we have that, for any two comonotonic tuples $\bfx,\bfx'\in J^n$,
\begin{eqnarray*}
f(\bfx\wedge\bfx')+f(\bfx\vee\bfx') &=& \big( f(\bfx)\wedge f(\bfx')\big)+\big(f(\bfx)\vee f(\bfx')\big)\\
&=& f(\bfx)+f(\bfx').
\end{eqnarray*}
In fact, from Corollary \ref{cor:8sa67}, we obtain the following factorization of quasi-polynomial functions into a sum of unary mappings.

\begin{corollary}
Every quasi-polynomial function $f\colon J^{n}\rightarrow\overline{\R}$ is comonotonically modular. Moreover, for every $\sigma\in S_n$, there exist functions $f^{\sigma}_i\colon J\to\overline{\R}$, $i\in [n]$, such that
$$
f(\bfx)=\sum_{i=1}^nf^{\sigma}_i(x_{\sigma(i)})=\sum_{i=1}^nf^{\sigma}_{\sigma^{-1}(i)}(x_i),\qquad \bfx\in J^n\cap\overline{\R}^n_{\sigma}.
$$
\end{corollary}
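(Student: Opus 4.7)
The plan is to combine the characterization of quasi-polynomial functions from Theorem~\ref{thm:CoMinMax} with the separability result from Corollary~\ref{cor:8sa67}. The first assertion, comonotonic modularity, is a direct consequence of the short computation displayed immediately above this corollary; the second assertion, comonotonic separability on each sector, then follows by a single application of Corollary~\ref{cor:8sa67}.

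For the comonotonic modularity, I would let $\bfx,\bfx'\in J^n$ be comonotonic and invoke Theorem~\ref{thm:CoMinMax} to write $f(\bfx\vee\bfx')=f(\bfx)\vee f(\bfx')$ and $f(\bfx\wedge\bfx')=f(\bfx)\wedge f(\bfx')$. Adding these two identities and using the elementary fact $(a\wedge b)+(a\vee b)=a+b$ yields
$$
f(\bfx\wedge\bfx')+f(\bfx\vee\bfx')=f(\bfx)+f(\bfx'),
$$
which is exactly the defining equation of comonotonic modularity. This step is essentially already carried out in the displayed computation just preceding the corollary; the statement of the corollary packages it as the first conclusion.

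For the separability representation, I would then apply Corollary~\ref{cor:8sa67} to the (now comonotonically modular) function $f$. This directly produces, for each $\sigma\in S_n$, unary functions $f^{\sigma}_i\colon J\to\overline{\R}$ such that $f(\bfx)=\sum_{i=1}^n f^{\sigma}_i(x_{\sigma(i)})$ on $J^n\cap\overline{\R}^n_{\sigma}$; the second displayed form is obtained by the routine change of index $i\mapsto\sigma^{-1}(i)$, which simply relabels the summands by the coordinate position rather than by the rank.

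The only delicate point, and the place where I expect some care to be needed, is that Corollary~\ref{cor:8sa67} was stated for real-valued functions, while here $f$ is allowed to take values in $\overline{\R}$. This is essentially notational: whenever $f$ is real-valued the argument goes through verbatim, and when $f$ attains the values $\pm\infty$ one either adopts the standard extended-real conventions for $+$, $\wedge$, $\vee$, or restricts attention to the sub-sectors on which $f$ is finite and treats the infinite values separately. No additional mathematical content is required beyond Theorem~\ref{thm:CoMinMax} and Corollary~\ref{cor:8sa67}.
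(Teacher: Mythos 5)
Your proposal is correct and follows essentially the same route as the paper: the displayed computation immediately preceding the corollary derives comonotonic modularity from Theorem~\ref{thm:CoMinMax} via the identity $(a\wedge b)+(a\vee b)=a+b$, and the separable representation is then read off from Corollary~\ref{cor:8sa67}, with the second display obtained by the reindexing $i\mapsto\sigma^{-1}(i)$ exactly as you describe. Your side remark about passing from $\R$-valued to $\overline{\R}$-valued functions flags a point the paper itself glosses over, and your suggested handling of it is adequate.
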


\section*{Acknowledgments}

This research is supported by the internal research project F1R-MTH-PUL-09MRDO of the University of Luxembourg.

\end{document}